\documentclass[reqno]{amsart}

\usepackage[usenames,dvipsnames]{pstricks}
\usepackage{epsfig}

\usepackage{color}
\setcounter{section}{0} \setlength{\oddsidemargin}{0.25cm}
\setlength{\evensidemargin}{0.25cm} \setlength{\textwidth}{16cm}
\setlength{\textheight}{21.5cm} \setlength{\topmargin}{0.2cm}

\date{\today}

\usepackage[latin1]{inputenc}
\usepackage[T1]{fontenc}
\usepackage{amsmath, amsthm}
\usepackage[english]{babel}
\usepackage{amssymb}
\usepackage{latexsym}
\usepackage{graphicx}
\usepackage[all]{xy}

\newtheorem{theorem}{Theorem}[section]

\newtheorem{proposition}[theorem]{Proposition}
\theoremstyle{definition}
\newtheorem{definition}[theorem]{Definition}

\newcommand{\ot}{\otimes}
\newcommand{\co}{\circ}

\begin{document}

\begin{center}

{\huge{\bf Strong Hopf modules for weak Hopf quasigroups}}

\end{center}

\ \\
\begin{center}
{\bf J.N. Alonso \'Alvarez$^{1}$, J.M. Fern\'andez Vilaboa$^{2}$, R.
Gonz\'{a}lez Rodr\'{\i}guez$^{3}$}
\end{center}

\ \\
\hspace{-0,5cm}$^{1}$ Departamento de Matem\'{a}ticas, Universidad
de Vigo, Campus Universitario Lagoas-Marcosende, E-36280 Vigo, Spain
(e-mail: jnalonso@ uvigo.es)
\ \\
\hspace{-0,5cm}$^{2}$ Departamento de \'Alxebra, Universidad de
Santiago de Compostela.  E-15771 Santiago de Compostela, Spain
(e-mail: josemanuel.fernandez@usc.es)
\ \\
\hspace{-0,5cm}$^{3}$ Departamento de Matem\'{a}tica Aplicada II,
Universidad de Vigo, Campus Universitario Lagoas-Mar\-co\-sen\-de, E-36310
Vigo, Spain (e-mail: rgon@dma.uvigo.es)
\ \\

{\bf Abstract} In this paper  we introduce the category of strong Hopf modules for a weak Hopf quasigroup $H$ in a braided monoidal category. We also prove that this category is equivalent to the category of right modules over the image of the target morphism of $H$.

\vspace{0.5cm}

{\bf Keywords.} Weak Hopf
algebra, Hopf quasigroup, bigroupoid, Strong Hopf module, Fundamental theorem of Hopf modules.

{\bf MSC 2010:} 18D10, 16T05, 17A30, 20N05.

\section{introduction}
Let ${\Bbb F}$ be a field and  ${\mathcal C}={\Bbb F}-Vect$. Let $M$ be a right $H$-module and a right $H$-comodule. If, for all $m\in M$ and $h\in H$, we write $m.h$ for the action and  we use the Sweedler notation $\rho_{M}(m)=m_{[0]}\ot m_{[1]}$ for the coaction, we will say that $M$ is a Hopf module if the equality  
$$\rho_{M}(m.h)=m_{[0]}.h_{(1)}\ot m_{[1]}h_{(2)}$$
holds, where $\delta_{H}(h)=h_{(1)}\ot h_{(2)}$ is the coproduct of $H$ and $m_{[1]}h_{(2)}$ the product in $H$ of $m_{[1]}$ and $h_{(2)}$. A morphism between two Hopf modules is a ${\Bbb F}$-linear map that is $H$-linear and $H$-colinear. Hopf modules and morphisms of Hopf modules constitute the category of Hopf modules denoted by ${\mathcal M}^{H}_{H}$. In 1969 Larsson and Sweedler proved a result, called Fundamental Theorem of Hopf Modules, that asserts the following: If $M\in {\mathcal M}^{H}_{H}$ and $M^{co H}=\{m\in M \;| \; \rho_{M}(m)=m\ot 1_{H}\}$ are the coinvariants of $H$ in $M$, $M$ is isomorphic to  $M^{co H}\ot H$ as Hopf modules (see \cite{Larson-Sweedler} and \cite{Sweedler}). On the other hand, if $N$ is a ${\Bbb F}$-vector space, the tensor product $N\ot H$, with the action and coaction induced by the product and the coproduct of $H$, is a Hopf module. This construction is functorial, so we have a functor $F=-\ot H:{\mathcal C}\rightarrow {\mathcal M}^{H}_{H}$. Also, for all $M\in {\mathcal M}^{H}_{H}$, the construction of $M^{co H}$ is functorial and we have a functor $G=(\;\;)^{co H}: {\mathcal M}^{H}_{H}\rightarrow {\mathcal C}$ and $F\dashv G$. Moreover, $F$ and $G$ is a pair of inverse equivalences, and therefore, ${\mathcal M}^{H}_{H}$ is equivalent to the category of ${\Bbb F}$-vector spaces.

The Fundamental Theorem of Hopf Modules also holds for weak Hopf algebras as was proved by B\"{o}hm, Nill and Szlach\'anyi in \cite{bohm}. In this case, if $H$ is a weak Hopf algebra, the category of Hopf modules is defined in the same way as in the Hopf algebra setting. For $M\in {\mathcal M}^{H}_{H}$, the coinvariants of $H$ in $M$ are defined by $M^{co H}=\{m\in M \;| \; \rho_{M}(m)=m_{[0]}\ot \Pi_{H}^{L}(m_{[1]})\}$, where $\Pi_{H}^{L}$ is the target morphism associated to $H$. Then, B\"{o}hm, Nill and Szlach\'anyi proved that 
$M$ is isomorphic to  $M^{co H}\ot_{H_{L}} H$ as Hopf modules, where $H_{L}$ is the image of 
$\Pi_{H}^{L}$. Moreover, if ${\mathcal C}_{H_{L}}$ is the category of right $H_{L}$-modules, there exist two functors $F=-\ot_{H_{L}} H:{\mathcal C}_{H_{L}}\rightarrow {\mathcal M}^{H}_{H}$ and $G=(\;\;)^{co H}: {\mathcal M}^{H}_{H}\rightarrow {\mathcal C}_{H_{L}}$ such that $F$ is left adjoint of $G$ and they induce a pair of inverse equivalences (see \cite{Hanna}). Therefore, in the weak setting, ${\mathcal M}^{H}_{H}$ is equivalent to ${\mathcal C}_{H_{L}}$. In this case, is a relevant fact the following property: the tensor product $M^{co H}\ot_{H_{L}} H$ is isomorphic as Hopf modules to $M^{co H}\times H$ where $M^{co H}\times H$ is the image of a suitable idempotent $\nabla_{M}:M^{co H}\otimes H\rightarrow M^{co H}\otimes H$.  Note that, as a consequence, in the weak framework the Fundamental Theorem of Hopf Modules can be written using  $M^{co H}\times H$ instead of $M^{co H}\ot_{H_{L}} H$.

In the two previous paragraphs we  spoke about associative algebraic structures like Hopf algebras and weak Hopf algebras. Recently Klim and Majid introduced in \cite{Majidesfera} the notion of Hopf quasigroup as a generalization of Hopf algebras in the context of non-associative algebra, in order to understand the structure and relevant properties of the algebraic $7$-sphere. A Hopf quasigroup is a particular instance of  unital coassociative $H$-bialgebra in the sense of P\'erez Izquierdo \cite{PI2}, and it includes as example the enveloping algebra  of a Malcev algebra, when the base ring has characteristic not equal to $2$ nor $3$. In this sense Hopf quasigroups extend the notion of Hopf algebra in a parallel way that Malcev algebras extend the one of Lie algebra. On the other hand, it also contains as an example the notion of quasigroup algebra  of an I.P. loop. Therefore, Hopf quasigroups unify I.P. loops and Malcev algebras in the same way that Hopf algebras unify groups and Lie algebras. For these non-associative algebraic structures, Brzezi\'nski introduced in \cite{Brz} the notion of Hopf module and he proved a version of  the Fundamental Theorem of Hopf Modules. In this case, the main difference appears in the definition of the category of Hopf modules ${\mathcal M}^{H}_{H}$, because the notion of Hopf module reflects the non-associativity of the product defined on $H$, and the morphisms are $H$-quasilinear and $H$-colinear  (see Definition 3.4 of \cite{Brz}). In Lemma 3.5 of \cite{Brz}, we can find that, if $M\in {\mathcal M}^{H}_{H}$ and $M^{co H}$ is defined like in the Hopf algebra setting, $M$ is isomorphic to  $M^{co H}\ot H$ as Hopf modules. Moreover, there exist two functors $F=-\ot H:{\mathcal C}\rightarrow {\mathcal M}^{H}_{H}$ and $G=(\;\;)^{co H}: {\mathcal M}^{H}_{H}\rightarrow {\mathcal C}$ such that  $F\dashv G$, and they induce a pair of inverse equivalences. Therefore, in this non-associative context ${\mathcal M}^{H}_{H}$ is equivalent to the category of ${\Bbb F}$-vector spaces as in the Hopf algebra ambit.

Working in a monoidal setting, in \cite{Asian} we introduce the notion of weak Hopf quasigroup as a new Hopf algebra generalization that  encompass weak Hopf algebras and Hopf quasigroups. A family of non-trivial examples of these algebraic structures can be obtained  working with bigroupoids, i.e., bicategories where every $1$-cell is an equivalence and every $2$-cell is an isomorphism (see Example 2.3 of \cite{Asian}).  For a weak Hopf quasigroup $H$ in a  braided monoidal category ${\mathcal C}$ with tensor product $\ot$, using the ideas proposed by Brzezi\'nski for Hopf quasigroups, in \cite{Asian} we introduce  the notion of Hopf module and  the category of Hopf modules ${\mathcal M}^{H}_{H}$. In this case, if we define $M^{co H}$ in the same way as in the weak Hopf algebra setting, we obtain a version of the  Fundamental Theorem of Hopf Modules in the following way: all Hopf module $M$ is isomorphic to  $M^{co H}\times H$ as Hopf modules, where $M^{co H}\times H$ is the image of the  same idempotent $\nabla_{M}$ used for Hopf modules associated to a weak Hopf algebra.  Moreover, in \cite{JPAA} we proved that $H_{L}$, the image of the target morphism, is a monoid and then it is possible to take into consideration the category ${\mathcal C}_{H_{L}}$, to construct the tensor product $M^{co H}\ot_{H_{L}} H$, and, if the functor $-\ot H$ preserves coequalizers, to endow this object with a Hopf module structure. Unfortunately, it is not possible to assure that $M^{co H}\ot_{H_{L}} H$ is isomorphic to $M^{co H}\times H$ as in the weak Hopf algebra case. In this paper we find the conditions under which these objects are isomorphic in ${\mathcal M}^{H}_{H}$. Then, as a consequence, we introduce the category of strong Hopf modules, denoted by   ${\mathcal SM}^{H}_{H}$ and we obtain that there exist two functors $F=-\ot_{H_{L}} H:{\mathcal C}_{H_{L}}\rightarrow {\mathcal SM}^{H}_{H}$ and  $G=(\;\;)^{co H}: {\mathcal SM}^{H}_{H}\rightarrow {\mathcal C}_{H_{L}}$ such that $F$ is left adjoint of $G$ and they induce a pair of inverse equivalences. In the Hopf quasigroup setting all Hopf module is strong, and then our results are the ones proved by Brzezi\'nski in \cite{Brz}. Also, in the weak Hopf case, all Hopf module is strong and then we generalize the theorem proved by B\"{o}hm, Nill and Szlach\'anyi in \cite{bohm}. 

\section{Weak Hopf quasigroups}
 
Throughout this paper $\mathcal C$ denotes a strict braided monoidal category with tensor product $\ot$, unit object $K$ and braiding $c$. For each object $M$ in  $ {\mathcal C}$, we denote the identity morphism by $id_{M}:M\rightarrow M$ and, for simplicity of notation, given objects $M$, $N$ and $P$ in ${\mathcal C}$ and a morphism $f:M\rightarrow N$, we write $P\ot f$ for $id_{P}\ot f$ and $f \ot P$ for $f\ot id_{P}$. We want to point out that there is no loss of generality in assuming that ${\mathcal C}$ is strict because by Theorem 3.5  of \cite{Christian} (which implies the Mac Lane's coherence theorem)  every monoidal category is monoidally equivalent to a strict one. This lets us to treat monoidal categories
as if they were strict and, as a consequence, the results proved in this paper hold for every non-strict symmetric monoidal category.

From now on we also assume in ${\mathcal C}$ that every idempotent morphism splits, i.e., if $\nabla:Y\rightarrow Y$ is such that $\nabla=\nabla\co\nabla$, there exist an object $Z$ and morphisms $i:Z\rightarrow Y$ and $p:Y\rightarrow Z$ such that $\nabla=i\co p$ and $p\co i =id_{Z}$. Note that, in these conditions, $Z$, $p$ and $i$ are unique up to isomorphism. There is
no loss of generality in assuming that ${\mathcal C}$ admits split idempotents, taking into account that, for a given category ${\mathcal C}$, there exists an universal embedding ${\mathcal
C}\rightarrow \hat{\mathcal C}$ such that $\hat{\mathcal C}$ admits
split idempotents, as was proved in  \cite{Karoubi}. The categories satisfying this property constitute a broad
class that includes, among others, the categories with epi-monic decomposition for
morphisms and categories with equalizers or coequalizers.

\begin{definition}
By a unital  magma in ${\mathcal C}$ we understand a triple $A=(A, \eta_{A}, \mu_{A})$ where $A$ is an object in ${\mathcal C}$ and $\eta_{A}:K\rightarrow A$ (unit), $\mu_{A}:A\ot A \rightarrow A$ (product) are morphisms in ${\mathcal C}$ such that $\mu_{A}\co (A\ot \eta_{A})=id_{A}=\mu_{A}\co (\eta_{A}\ot A)$. If $\mu_{A}$ is associative, that is, $\mu_{A}\co (A\ot \mu_{A})=\mu_{A}\co (\mu_{A}\ot A)$, the unital magma will be called a monoid in ${\mathcal C}$.   Given two unital magmas
(monoids) $A= (A, \eta_{A}, \mu_{A})$ and $B=(B, \eta_{B}, \mu_{B})$, $f:A\rightarrow B$ is a morphism of unital magmas (monoids)  if $\mu_{B}\co (f\ot f)=f\co \mu_{A}$ and $ f\co \eta_{A}= \eta_{B}$. 

By duality, a counital comagma in ${\mathcal C}$ is a triple ${D} = (D, \varepsilon_{D}, \delta_{D})$ where $D$ is an object in ${\mathcal C}$ and $\varepsilon_{D}: D\rightarrow K$ (counit), $\delta_{D}:D\rightarrow D\ot D$ (coproduct) are morphisms in ${\mathcal C}$ such that $(\varepsilon_{D}\ot D)\co \delta_{D}= id_{D}=(D\ot \varepsilon_{D})\co \delta_{D}$. If $\delta_{D}$ is coassociative, that is, $(\delta_{D}\ot D)\co \delta_{D}= (D\ot \delta_{D})\co \delta_{D}$, the counital comagma will be called a comonoid. If ${D} = (D, \varepsilon_{D}, \delta_{D})$ and ${ E} = (E, \varepsilon_{E}, \delta_{E})$ are counital comagmas
(comonoids), $f:D\rightarrow E$ is  a morphism of counital comagmas (comonoids) if $(f\ot f)\co \delta_{D} =\delta_{E}\co f$ and  $\varepsilon_{E}\co f =\varepsilon_{D}.$

If  $A$, $B$ are unital magmas (monoids) in ${\mathcal C}$, the object $A\ot B$ is a unital  magma (monoid) in ${\mathcal C}$ where $\eta_{A\ot B}=\eta_{A}\ot \eta_{B}$ and $\mu_{A\ot B}=(\mu_{A}\ot \mu_{B})\co (A\ot c_{B,A}\ot B).$  In a dual way, if $D$, $E$ are counital comagmas (comonoids) in ${\mathcal C}$, $D\ot E$ is a  counital comagma (comonoid) in ${\mathcal C}$ where $\varepsilon_{D\ot E}=\varepsilon_{D}\ot \varepsilon_{E}$ and $\delta_{D\ot
E}=(D\ot c_{D,E}\ot E)\co( \delta_{D}\ot \delta_{E}).$

Finally, if $D$ is a comagma and $A$ a magma, given two morphisms $f,g:D\rightarrow A$ we will denote by $f\ast g$ its convolution product in ${\mathcal C}$, that is 
$$f\ast g=\mu_{A}\co (f\ot g)\co \delta_{D}.$$

\end{definition}

Now we recall the notion of weak Hopf quasigroup in a braided monoidal category that we introduced in \cite{Asian}.

\begin{definition}
\label{Weak-Hopf-quasigroup}
{\rm A weak Hopf quasigroup $H$   in ${\mathcal C}$ is a unital magma $(H, \eta_H, \mu_H)$ and a comonoid $(H,\varepsilon_H, \delta_H)$ such that the following axioms hold:

\begin{itemize}

\item[(a1)] $\delta_{H}\co \mu_{H}=(\mu_{H}\ot \mu_{H})\co \delta_{H\ot H}.$

\item[(a2)] $\varepsilon_{H}\co \mu_{H}\co (\mu_{H}\ot H)=\varepsilon_{H}\co \mu_{H}\co (H\ot \mu_{H})$

\item[ ]$= ((\varepsilon_{H}\co \mu_{H})\ot (\varepsilon_{H}\co \mu_{H}))\co (H\ot \delta_{H}\ot H)$ 

\item[ ]$=((\varepsilon_{H}\co \mu_{H})\ot (\varepsilon_{H}\co \mu_{H}))\co (H\ot (c_{H,H}^{-1}\co\delta_{H})\ot H).$

\item[(a3)]$(\delta_{H}\ot H)\co \delta_{H}\co \eta_{H}=(H\ot \mu_{H}\ot H)\co ((\delta_{H}\co \eta_{H}) \ot (\delta_{H}\co \eta_{H}))$  \item[ ]$=(H\ot (\mu_{H}\co c_{H,H}^{-1})\ot H)\co ((\delta_{H}\co \eta_{H}) \ot (\delta_{H}\co \eta_{H})).$

\item[(a4)] There exists  $\lambda_{H}:H\rightarrow H$ in ${\mathcal C}$ (called the antipode of $H$) such that, if we denote the morphisms $id_{H}\ast \lambda_{H}$ by  $\Pi_{H}^{L}$ (target morphism) and $\lambda_{H}\ast id_{H}$ by $\Pi_{H}^{R}$ (source morphism),

\begin{itemize}

\item[(a4-1)] $\Pi_{H}^{L}=((\varepsilon_{H}\co \mu_{H})\ot H)\co (H\ot c_{H,H})\co ((\delta_{H}\co \eta_{H})\ot
H).$

\item[(a4-2)] $\Pi_{H}^{R}=(H\ot(\varepsilon_{H}\co \mu_{H}))\co (c_{H,H}\ot H)\co (H\ot (\delta_{H}\co \eta_{H})).$

\item[(a4-3)]$\lambda_{H}\ast \Pi_{H}^{L}=\Pi_{H}^{R}\ast \lambda_{H}= \lambda_{H}.$

\item[(a4-4)] $\mu_H\co (\lambda_H\ot \mu_H)\co (\delta_H\ot H)=\mu_{H}\co (\Pi_{H}^{R}\ot H).$

\item[(a4-5)] $\mu_H\co (H\ot \mu_H)\co (H\ot \lambda_H\ot H)\co (\delta_H\ot H)=\mu_{H}\co (\Pi_{H}^{L}\ot H).$

\item[(a4-6)] $\mu_H\co(\mu_H\ot \lambda_H)\co (H\ot \delta_H)=\mu_{H}\co (H\ot \Pi_{H}^{L}).$

\item[(a4-7)] $\mu_H\co (\mu_H\ot H)\co (H\ot \lambda_H\ot H)\co (H\ot \delta_H)=\mu_{H}\co (H\ot \Pi_{H}^{R}).$

\end{itemize}

\end{itemize}

Note that, if in the previous definition the triple $(H, \eta_H, \mu_H)$ is a monoid, we obtain the notion of weak Hopf algebra in a braided monoidal category. Then, if ${\mathcal C}$ is the category of vector spaces over a field ${\Bbb F}$, we have the monoidal version of the original definition of weak Hopf algebra introduced by B\"{o}hm, Nill and Szlach\'anyi in \cite{bohm}. On the other hand, under these conditions, if  $\varepsilon_H$ and $\delta_H$ are  morphisms of unital magmas (equivalently, $\eta_{H}$, $\mu_{H}$ are morphisms of counital comagmas), $\Pi_{H}^{L}=\Pi_{H}^{R}=\eta_{H}\ot \varepsilon_{H}$. As a consequence, conditions (a2), (a3), (a4-1)-(a4-3) trivialize, and we get the notion of Hopf quasigroup defined  by Klim and Majid in \cite{Majidesfera} in the category of vector spaces over a field ${\Bbb F}$.
}

\end{definition}

Below we will summarize the main properties of weak Hopf quasigroups. There are more, and the interested reader can see a complete list with the proofs  in \cite{Asian}.

First note that, by Propositions 3.1 and 3.2 of \cite{Asian}, the following equalities 
\begin{equation}
\label{pi-l}
\Pi_{H}^{L}\ast id_{H}=id_{H}\ast \Pi_{H}^{R}=id_{H},
\end{equation}
hold. Moreover, the antipode  is unique, $\lambda_{H}\co \eta_{H}=\eta_{H}$,  $\varepsilon_{H}\co\lambda_{H}=\varepsilon_{H}$, and, by Theorem 3.19 of \cite{Asian}, we have that it is antimultiplicative and anticomultiplicative. Also, if we define the morphisms $\overline{\Pi}_{H}^{L}$ and $\overline{\Pi}_{H}^{R}$ by 
$\overline{\Pi}_{H}^{L}=(H\ot (\varepsilon_{H}\co \mu_{H}))\co ((\delta_{H}\co \eta_{H})\ot H)$, $\;\overline{\Pi}_{H}^{R}=((\varepsilon_{H}\co \mu_{H})\ot H)\co (H\ot (\delta_{H}\co \eta_{H})),$
we proved in Proposition 3.4 of \cite{Asian}, that $\Pi_{H}^{L}$, $\Pi_{H}^{R}$, $\overline{\Pi}_{H}^{L}$ and 
$\overline{\Pi}_{H}^{R}$ are idempotent. On the other hand, Propositions 3.5, 3.7 and 3.9 of \cite{Asian} assert that 
\begin{equation}
\label{mu-pi-l}
\mu_{H}\co (H\ot \Pi_{H}^{L})=((\varepsilon_{H}\co
\mu_{H})\ot H)\co (H\ot c_{H,H})\co (\delta_{H}\ot
H),
\end{equation}
\begin{equation}
\label{mu-pi-r}
\mu_{H}\co (\Pi_{H}^{R}\ot H)=(H\ot(\varepsilon_{H}\co \mu_{H}))\co (c_{H,H}\ot H)\co
(H\ot \delta_{H}),
\end{equation}
\begin{equation}
\label{mu-pi-l-var}
\mu_{H}\co (H\ot \overline{\Pi}_{H}^{L})=(H\ot (\varepsilon_{H}\co
\mu_{H}))\co (\delta_{H}\ot H),
\end{equation}
\begin{equation}
\label{mu-pi-r-var}
\mu_{H}\co (\overline{\Pi}_{H}^{R}\ot H)=((\varepsilon_{H}\co
\mu_{H})\ot H)\co (H\ot \delta_{H}), 
\end{equation}
\begin{equation}
\label{delta-pi-l}
 (H\ot \Pi_{H}^{L})\co \delta_{H}=(\mu_{H}\ot H)\co (H\ot c_{H,H})\co ((\delta_{H}\co \eta_{H})\ot
H),
\end{equation}
\begin{equation}
\label{delta-pi-r}
(\Pi_{H}^{R}\ot H)\co \delta_{H}=(H\ot \mu_{H})\co (c_{H,H}\ot H)\co
(H\ot (\delta_{H}\co \eta_{H})),
\end{equation}
\begin{equation}
\label{delta-pi-l-var}
(\overline{\Pi}_{H}^{L}\ot H)\co \delta_{H}=(H\ot 
\mu_{H})\co ((\delta_{H}\co \eta_{H})\ot H),
\end{equation}
\begin{equation}
\label{delta-pi-r-var}
 (H\ot \overline{\Pi}_{H}^{R})\co \delta_{H}=(
\mu_{H}\ot H)\co (H\ot (\delta_{H}\co \eta_{H})), 
\end{equation}
hold. Also, it is possible to prove the following identities involving the idempotent morphisms $\Pi_{H}^{L}$, $\Pi_{H}^{R}$, $\overline{\Pi}_{H}^{L}$,  $\overline{\Pi}_{H}^{R}$ and the antipode $\lambda_{H}$  (see  Propositions 3.11 and 3.12 of \cite{Asian}): 
\begin{equation}
\label{pi-composition-1}
\Pi_{H}^{L}\co
\overline{\Pi}_{H}^{L}=\Pi_{H}^{L},\;\Pi_{H}^{L}\co
\overline{\Pi}_{H}^{R}=\overline{\Pi}_{H}^{R},\;\;\;\overline{\Pi}_{H}^{L}\co
\Pi_{H}^{L}=\overline{\Pi}_{H}^{L},\;\;\;\overline{\Pi}_{H}^{R}\co
\Pi_{H}^{L}=\Pi_{H}^{L},\;\;\;
\end{equation}
\begin{equation}
\label{pi-composition-3}
\Pi_{H}^{R}\co
\overline{\Pi}_{H}^{L}=\overline{\Pi}_{H}^{L},\;\;\;
\Pi_{H}^{R}\co
\overline{\Pi}_{H}^{R}=\Pi_{H}^{R},\;\;\;
\overline{\Pi}_{H}^{L}\co
\Pi_{H}^{R}=\Pi_{H}^{R},\;\;\; \overline{\Pi}_{H}^{R}\co
\Pi_{H}^{R}=\overline{\Pi}_{H}^{R}, 
\end{equation}
\begin{equation}
\label{pi-antipode-composition-1}
\Pi_{H}^{L}\co \lambda_{H}=\Pi_{H}^{L}\co
\Pi_{H}^{R}= \lambda_{H}\co \Pi_{H}^{R},\;\;\;
\Pi_{H}^{R}\co
\lambda_{H}=\Pi_{H}^{R}\co \Pi_{H}^{L}= \lambda_{H}\co
\Pi_{H}^{L},
\end{equation}
\begin{equation}
\label{pi-antipode-composition-3}
\Pi_{H}^{L}=\overline{\Pi}_{H}^{R}\co
\lambda_{H}=\lambda_{H}
\co\overline{\Pi}_{H}^{L},\;\;\;\Pi_{H}^{R}=
\overline{\Pi}_{H}^{L}\co \lambda_{H}=\lambda_{H} \co
\overline{\Pi}_{H}^{R}. 
\end{equation}

Moreover, by Proposition 3.16  of \cite{Asian},  we have
\begin{equation}
\label{mu-assoc-1}
\mu_{H}\co (\mu_{H}\ot H)\co (H\ot ((\Pi_{H}^{L}\ot H)\co \delta_{H}))=\mu_{H}=
\mu_{H}\co (\mu_{H}\ot \Pi_{H}^{R})\co (H\ot  \delta_{H}),
\end{equation}
\begin{equation}
\label{mu-assoc-2}
\mu_{H}\co (\Pi_{H}^{L}\ot \mu_{H})\co (\delta_{H}\ot H)=\mu_{H}=
\mu_{H}\co (H\ot (\mu_{H}\co ( \Pi_{H}^{R}\ot H)))\co (\delta_{H}\ot H).
\end{equation}

On the other hand, if $H_{L}=Im(\Pi_{H}^{L})$, 
$p_{L}:H\rightarrow H_{L}$, and $i_{L}:H_{L}\rightarrow H$ are the
morphisms such that $\Pi_{H}^{L}=i_{L}\co p_{L}$ and $p_{L}\co
i_{L}=id_{H_{L}}$, 
$$
\setlength{\unitlength}{3mm}
\begin{picture}(30,4)
\put(3,2){\vector(1,0){4}} \put(11,2.5){\vector(1,0){10}}
\put(11,1.5){\vector(1,0){10}} \put(1,2){\makebox(0,0){$H_{L}$}}
\put(9,2){\makebox(0,0){$H$}} \put(24,2){\makebox(0,0){$H\ot H$}}
\put(5.5,3){\makebox(0,0){$i_{L}$}} \put(16,3.5){\makebox(0,0){$
\delta_{H}$}} \put(16,0.5){\makebox(0,0){$(H\ot \Pi_{H}^{L}) \co
\delta_{H}$}}
\end{picture}
$$
is an equalizer diagram  and
$$
\setlength{\unitlength}{1mm}
\begin{picture}(101.00,10.00)
\put(20.00,8.00){\vector(1,0){25.00}}
\put(20.00,4.00){\vector(1,0){25.00}}
\put(55.00,6.00){\vector(1,0){21.00}}
\put(32.00,11.00){\makebox(0,0)[cc]{$\mu_{H}$ }}
\put(33.00,0.00){\makebox(0,0)[cc]{$\mu_{H}\co (H\ot \Pi_{H}^{L})
$ }} \put(65.00,9.00){\makebox(0,0)[cc]{$p_{L}$ }}
\put(13.00,6.00){\makebox(0,0)[cc]{$ H\otimes H$ }}
\put(50.00,6.00){\makebox(0,0)[cc]{$ H$ }}
\put(83.00,6.00){\makebox(0,0)[cc]{$H_{L} $ }}
\end{picture}
$$
is a coequalizer diagram. As a consequence, $(H_{L},
\eta_{H_{L}}=p_{L}\co \eta_{H}, \mu_{H_{L}}=p_{L}\co \mu_{H}\co
(i_{L}\ot i_{L}))$ is a unital magma in ${\mathcal C}$ and $(H_{L},
\varepsilon_{H_{L}}=\varepsilon_{H}\co i_{L}, \delta_{H}=(p_{L}\ot
p_{L})\co \delta_{H}\co i_{L})$ is a comonoid in ${\mathcal C}$ (see Proposition 3.13  of \cite{Asian}). 
Surprisingly,  the product $\mu_{H_{L}}$ is associative because, by Proposition 2.4 of \cite{JPAA},  we have that 
\begin{equation}
\label{aux-1-monoid-hl}
\delta_{H}\co \mu_{H}\co (i_{L}\ot H)=(\mu_{H}\ot H)\co (i_{L}\ot \delta_{H}), 
\end{equation}
\begin{equation}
\label{aux-2-monoid-hl}
\delta_{H}\co \mu_{H}\co (H\ot i_{L})=(\mu_{H}\ot H)\co (H\ot c_{H,H})\co  (\delta_{H}\ot i_{L}). 
\end{equation}
and, as a consequence, the following identities hold
\begin{equation}
\label{monoid-hl-1}
\mu_{H}\co ((\mu_{H}\co (i_{L}\ot H))\ot H)=\mu_{H}\co (i_{L}\ot \mu_{H}), 
\end{equation}
\begin{equation}
\label{monoid-hl-2}
\mu_{H}\co (H\ot (\mu_{H}\co (i_{L}\ot H)))=\mu_{H}\co ((\mu_{H}\co (H\ot i_{L}))\ot H), 
\end{equation}
\begin{equation}
\label{monoid-hl-3}
\mu_{H}\co (H\ot (\mu_{H}\co (H\ot i_{L})))=\mu_{H}\co (\mu_{H}\ot i_{L}). 
\end{equation}
By Proposition 3.9 of \cite{Asian}, (\ref{monoid-hl-2}) and the equality $\Pi_{H}^{L}\co \mu_{H}\co (\Pi_{H}^{L}\ot \Pi_{H}^{L})=\mu_{H}\co (\Pi_{H}^{L}\ot \Pi_{H}^{L})$,  it is easy to show that $\mu_{H_{L}}\co (H_{L}\ot \mu_{H_{L}})=\mu_{H_{L}}\co (\mu_{H_{L}}\ot H_{L})$ and therefore the unital magma $H_{L}$ is a monoid in the category ${\mathcal C}$. 
 
\begin{definition}
If $B$ is a monoid in ${\mathcal C}$, we will say that $B$ is separable if  there exists a morphism $q_{B}:K\rightarrow B\ot B$ satisfying  $(\mu_{B}\ot B)\co (B\ot q_{B})= (B\ot \mu_{B})\co
(q_{B}\ot B)$ and $ \mu_{B}\co q_{B}=\eta_{B}.$  The morphism
$q_{B}$ is called the Casimir morphism of $B$.  If the first equality of the previous line  holds and there exists a morphism $\varepsilon_{B}:B\rightarrow K$ such that $(B\ot \varepsilon_{B})\co q_{B}=\eta_{B}=(\varepsilon_{B}\ot B)\co q_{B}$, we will say that $B$ is Frobenius.
\end{definition}

\begin{proposition}
Let $H$ be a weak Hopf quasigroup. The monoid $H_{L}$ is Frobenius separable.
Therefore, if ${\mathcal C}$ is the category of vector spaces over
a field ${\Bbb F}$, $H_{L}$ is semisimple.
\end{proposition}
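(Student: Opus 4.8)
The plan is to produce explicit candidates for the Frobenius functional and the Casimir morphism of $H_{L}$ and then to check, one by one, the defining conditions of a Frobenius separable monoid, transporting each of them along $p_{L}$ and $i_{L}$ to an identity in $H$ that can be settled with the axioms (a1)--(a4) and the derived formulas (\ref{pi-l})--(\ref{monoid-hl-3}). I would take as Frobenius functional the counit of the comonoid $H_{L}$, namely $\varepsilon_{H_{L}}=\varepsilon_{H}\co i_{L}$, and as Casimir morphism
$$q_{H_{L}}=\delta_{H_{L}}\co \eta_{H_{L}}=(p_{L}\ot p_{L})\co \delta_{H}\co \eta_{H}:K\rightarrow H_{L}\ot H_{L},$$
where the second equality uses $i_{L}\co p_{L}\co \eta_{H}=\Pi_{H}^{L}\co \eta_{H}=\eta_{H}$, i.e. that the unit is fixed by the target morphism.

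With these choices the two Frobenius conditions are immediate, since
$$(H_{L}\ot \varepsilon_{H_{L}})\co q_{H_{L}}=\big((H_{L}\ot \varepsilon_{H_{L}})\co \delta_{H_{L}}\big)\co \eta_{H_{L}}=\eta_{H_{L}},$$
and symmetrically on the other side; both equalities are nothing but the counit axiom of the comonoid $H_{L}$ evaluated at $\eta_{H_{L}}$. For the separability condition I would unfold the definitions of $\mu_{H_{L}}$ and $q_{H_{L}}$ to obtain $\mu_{H_{L}}\co q_{H_{L}}=p_{L}\co \mu_{H}\co (\Pi_{H}^{L}\ot \Pi_{H}^{L})\co \delta_{H}\co \eta_{H}$, so that it suffices to prove $p_{L}\co \mu_{H}\co (\Pi_{H}^{L}\ot \Pi_{H}^{L})\co \delta_{H}\co \eta_{H}=p_{L}\co \eta_{H}$. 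I expect to reach this by replacing $(H\ot \Pi_{H}^{L})\co \delta_{H}$ with its expression (\ref{delta-pi-l}), regrouping the resulting products with the controlled associativity identities (\ref{monoid-hl-1})--(\ref{monoid-hl-3}) and (\ref{mu-assoc-1})--(\ref{mu-assoc-2}), and finally collapsing the expression via $\Pi_{H}^{L}\ast id_{H}=id_{H}$ from (\ref{pi-l}), the idempotence of $\Pi_{H}^{L}$ and the counit properties contained in (a2)--(a3), so that everything reduces to $\Pi_{H}^{L}\co \eta_{H}=\eta_{H}$ before applying $p_{L}$.

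For the Casimir (bimodule) condition $(\mu_{H_{L}}\ot H_{L})\co (H_{L}\ot q_{H_{L}})=(H_{L}\ot \mu_{H_{L}})\co (q_{H_{L}}\ot H_{L})$, I would use that $i_{L}\ot i_{L}$ is a split monomorphism and verify the equality after composing with it. Writing $\omega=(i_{L}\ot i_{L})\co q_{H_{L}}=(\Pi_{H}^{L}\ot \Pi_{H}^{L})\co \delta_{H}\co \eta_{H}$ and using $i_{L}\co p_{L}=\Pi_{H}^{L}$, the condition becomes the identity of morphisms $H_{L}\rightarrow H\ot H$
$$(\Pi_{H}^{L}\ot H)\co (\mu_{H}\ot H)\co (i_{L}\ot \omega)=(H\ot \Pi_{H}^{L})\co (H\ot \mu_{H})\co (\omega \ot i_{L}),$$
which expresses that $\omega$ is central for the $H_{L}$-bimodule structure. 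This is the delicate step: I would prove it using the equalizer description of $i_{L}$ (so that $\delta_{H}\co i_{L}=(H\ot \Pi_{H}^{L})\co \delta_{H}\co i_{L}$), the coproduct formulas (\ref{delta-pi-l})--(\ref{delta-pi-r-var}) and product formulas (\ref{mu-pi-l})--(\ref{mu-pi-r-var}) to move the target idempotents across $\mu_{H}$, the compositions (\ref{pi-composition-1})--(\ref{pi-antipode-composition-3}), and once more the non-associative substitutes (\ref{monoid-hl-1})--(\ref{monoid-hl-3}) and (\ref{aux-1-monoid-hl})--(\ref{aux-2-monoid-hl}) in place of associativity.

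The main obstacle is precisely this Casimir condition: because $\mu_{H}$ is not associative in a weak Hopf quasigroup, every reassociation must be licensed by one of (\ref{monoid-hl-1})--(\ref{monoid-hl-3}), (\ref{mu-assoc-1})--(\ref{mu-assoc-2}) or (\ref{aux-1-monoid-hl})--(\ref{aux-2-monoid-hl}), and keeping the target idempotents in the exact positions where those identities apply is where the bookkeeping is subtle. Once the Casimir and separability identities are in hand, the pair $(q_{H_{L}},\varepsilon_{H_{L}})$ witnesses that $H_{L}$ is Frobenius separable. Finally, over ${\mathcal C}={\Bbb F}-Vect$ the separability morphism $q_{H_{L}}$ furnishes a separability idempotent in $H_{L}\ot H_{L}^{op}$, which splits every short exact sequence of $H_{L}$-modules; hence $H_{L}$ is semisimple, giving the last assertion.
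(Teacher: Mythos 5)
Your choice of Frobenius structure is in fact identical to the paper's, even though it looks different: the paper takes $q_{H_{L}}=((p_{L}\co \lambda_{H})\ot p_{L})\co \delta_{H}\co \eta_{H}$, and this equals your $(p_{L}\ot p_{L})\co \delta_{H}\co \eta_{H}$ because $p_{L}\co \lambda_{H}=p_{L}\co \Pi_{H}^{R}$ (compose $\Pi_{H}^{L}\co \lambda_{H}=\Pi_{H}^{L}\co \Pi_{H}^{R}$ from (\ref{pi-antipode-composition-1}) with $p_{L}$ and use $p_{L}\co i_{L}=id_{H_{L}}$) and $(\Pi_{H}^{R}\ot H)\co \delta_{H}\co \eta_{H}=\delta_{H}\co \eta_{H}$ (compose (\ref{delta-pi-r}) with $\eta_{H}$ and use naturality of $c$). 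So your candidates are the right ones, and your verifications of the two counit identities and of $\mu_{H_{L}}\co q_{H_{L}}=\eta_{H_{L}}$ are correct; for the latter, the route you sketch ((\ref{delta-pi-l}) composed with $\eta_{H}$ to drop one $\Pi_{H}^{L}$, then (\ref{pi-l})) does work.

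The genuine gap is the bimodule (Casimir) condition $(\mu_{H_{L}}\ot H_{L})\co (H_{L}\ot q_{H_{L}})=(H_{L}\ot \mu_{H_{L}})\co (q_{H_{L}}\ot H_{L})$: you reformulate it after composing with the split monomorphism $i_{L}\ot i_{L}$ and list tools that might prove it, but you never carry out the computation --- you yourself call it ``the main obstacle''. This matters because, under the paper's definitions, that identity is required both for separability and for the Frobenius property, so with it missing neither of the two claimed conclusions is actually established; naming the identities that ``should'' suffice is not a proof, precisely because in this non-associative setting every reassociation has to be licensed step by step. The paper does not perform this computation in the text either, but it discharges it by invoking the proof of Proposition 2.19 of \cite{IND} (the weak braided Hopf algebra case), and it records the stronger fact that both sides equal $\delta_{H_{L}}$, which is the natural intermediate target for the calculation. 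To complete your plan you would essentially have to reproduce that computation; note that it also uses identities absent from your toolkit, for instance $\delta_{H}\co i_{L}=(\mu_{H}\ot H)\co (i_{L}\ot (\delta_{H}\co \eta_{H}))$ (obtained from (\ref{aux-1-monoid-hl}) composed with $H_{L}\ot \eta_{H}$) and $\Pi_{H}^{L}\co \mu_{H}\co (\Pi_{H}^{L}\ot \Pi_{H}^{L})=\mu_{H}\co (\Pi_{H}^{L}\ot \Pi_{H}^{L})$ (quoted in Section 2 of the paper from Proposition 3.9 of \cite{Asian}).
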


\begin{proof} Let $q_{H_{L}}:K\rightarrow H_{L}\ot H_{L}$ and
be the morphism defined
by $q_{H_{L}}=((p_{L}\co \lambda_{H})\ot p_{L})\co \delta_{H}\co
\eta_{H}$. Then, using the same proof of the similar result proved for weak braided Hopf algebras in \cite{IND} (see Proposition 2.19.), we obtain that $q_{H_{L}}$ is the Casimir morphism
of $H_{L}$ because $(\mu_{H_{L}}\ot H_{L})\co (H_{L}\ot q_{H_{L}})= \delta_{H_{L}}= (H_{L}\ot \mu_{H_{L}})\co (q_{H_{L}}\ot H_{L})$ and $\mu_{H_{L}}\co q_{H_{L}}=\eta_{H_{L}}.$  Also, $(H_{L}\ot \varepsilon_{H_{L}})\co q_{H_{L}}=\eta_{H_{L}}=(\varepsilon_{H_{L}}\ot H_{L})\co q_{H_{L}}$ and $H_{L}$ is Frobenius. Finally, if ${\mathcal C}$ is the category of vector spaces over a field ${\Bbb F}$, the semisimple character for $H_{L}$ follows from \cite{PIE}. 
\end{proof}

Finally, if $H_{R}=Im(\Pi_{H}^{R})$,
$p_{R}:H\rightarrow H_{R}$, and $i_{R}:H_{R}\rightarrow H$ are the
morphisms such that $\Pi_{H}^{R}=i_{R}\co p_{R}$ and $p_{R}\co
i_{R}=id_{H_{R}}$, the pair $(H_{R}, i_{R})$ is the equalizer of $\delta_{H}$ and $(\Pi_{H}^{R}\ot H) \co
\delta_{H}$. Also the pair $(H_{R}, p_{R})$ is the coequalizer of $\mu_{H}$ and $\mu_{H}\co (\Pi_{H}^{R}\ot H)
$.  As a consequence, $(H_{R},
\eta_{H_{R}}=p_{R}\co \eta_{H}, \mu_{H_{R}}=p_{R}\co \mu_{H}\co
(i_{R}\ot i_{R}))$ is a unital magma in ${\mathcal C}$ and $(H_{R},
\varepsilon_{H_{R}}=\varepsilon_{H}\co i_{R}, \delta_{H}=(p_{R}\ot
p_{R})\co \delta_{H}\co i_{R})$ is a comonoid in ${\mathcal C}.$ In a similar way to (\ref{monoid-hl-1})- (\ref{monoid-hl-3}), we can obtain 
\begin{equation}
\label{monoid-hr-1}
\mu_{H}\co ((\mu_{H}\co (i_{R}\ot H))\ot H)=\mu_{H}\co (i_{R}\ot \mu_{H}), 
\end{equation}
\begin{equation}
\label{monoid-hr-2}
\mu_{H}\co (H\ot (\mu_{H}\co (i_{R}\ot H)))=\mu_{H}\co ((\mu_{H}\co (H\ot i_{R}))\ot H), 
\end{equation}
\begin{equation}
\label{monoid-hr-3}
\mu_{H}\co (H\ot (\mu_{H}\co (H\ot i_{R})))=\mu_{H}\co (\mu_{H}\ot i_{R}). 
\end{equation}
Then, it is easy to show that $\mu_{H_{R}}\co (H_{R}\ot \mu_{H_{R}})=\mu_{H_{R}}\co (\mu_{H_{R}}\ot H_{R})$ and therefore the unital magma $H_{R}$ is a monoid in ${\mathcal C}$.  As a consequence, $H_{R}$ is Frobenius separable with Casimir morphism $q_{H_{R}}=(p_{R}\ot (p_{R}\co \lambda_{H}))\co
\delta_{H}\co \eta_{H}.$ Therefore, if ${\mathcal C}$ is the category of vector spaces over
a field ${\Bbb F}$, $H_{R}$ is semisimple.

\section{Hopf modules, strong Hopf modules and categorical equivalences}

The definition of right-right $H$-Hopf module for a weak Hopf quasigroup $H$ was introduced in \cite{Asian}. If $H$ is a Hopf quasigroup and ${\mathcal C}$ is the symmetric monoidal category ${\Bbb F}-Vect$, we get the notion defined by Brzezi\'nski in \cite{Brz} for Hopf quasigroups.

\begin{definition}
\label{Hopf-module}
Let $H$ be a  weak Hopf quasigroup and $M$  an object  in ${\mathcal
C}$. We say that $(M, \phi_{M}, \rho_{M})$ is a right-right $H$-Hopf module if the following axioms hold:
\begin{itemize}
\item[(b1)] The pair $(M, \rho_{M})$ is a right $H$-comodule, i.e. $\rho_{M}:M \rightarrow M\ot H$ is a morphism 
such that $(M\ot \varepsilon_{H})\co \rho_{M}=id_{M}$ and $(\rho_{M}\ot H)\co \rho_{M}=(M\ot \delta_{H})\co \rho_{M}$.
\item[(b2)] The morphism $\phi_{M}:M\ot H\rightarrow M$ satisfies:
\begin{itemize}
\item[(b2-1)] $\phi_{M}\co (M\ot \eta_{H})=id_{M}.$
\item[(b2-2)] $\rho_{M}\co \phi_{M}=(\phi_{M}\ot \mu_{H})\co (M\ot c_{H,H}	\ot H)\co (\rho_{M}\ot \delta_{H})$, i.e.
$\phi_{M}$ is a morphism of right $H$-comodules with the codiagonal coaction on $M\ot H$.
\end{itemize}
\item[(b3)] $\phi_{M}\circ (\phi_{M}\ot \lambda_H)\circ (M\ot \delta_{H})=\phi_{M}\co (M\ot \Pi_{H}^{L}).$
\item[(b4)] $\phi_{M}\circ (\phi_{M}\ot H)\circ (M\ot \lambda_H\ot H)\circ (M\ot \delta_H)=\phi_{M}\co (M\ot \Pi_{H}^{R}).$
\item[(b5)] $\phi_{M}\circ (\phi_{M}\ot H)\circ (M\ot \Pi_{H}^{L}\ot H)\circ (M\ot \delta_H)=\phi_{M}.$
\end{itemize}
\end{definition}
Obviously, if $H$ is a  weak Hopf quasigroup, the triple $(H, \phi_{H}=\mu_{H}, \rho_{H}=\delta_{H})$ is a  right-right $H$-Hopf module. Moreover,  if $(M, \phi_{M}, \rho_{M})$ is a right-right $H$-Hopf module, the axiom (b5)  is equivalent to 
$ \phi_{M}\circ (\phi_{M}\ot \Pi_{H}^{R})\circ (M\ot \delta_{H})=\phi_{M}$. Also, composing in (b2-2) with $M\ot \eta_{H}$ and $M\ot \varepsilon_{H}$ we have that $\phi_{M}\circ (M\ot \Pi_{H}^{R})\circ \rho_{M}=id_{M},$
and if $(M, \phi_{M}, \rho_{M})$, $(N, \phi_{N}, \rho_{N})$ are right-right $H$-Hopf modules, and there  exists a right $H$-comodule isomorphism $\alpha:M\rightarrow N$, the triple $(M,\phi_{M}^{\alpha}=\alpha^{-1}\co \phi_{N}\co (\alpha\ot H), \rho_{M})$ is a right-right $H$-Hopf module (see Proposition 4.7 of \cite{Asian}).

By Proposition 4.3 of \cite{Asian}, we have that  for all $(M, \phi_{M}, \rho_{M})$ right-right $H$-Hopf module, the morphism $q_{M}:=\phi_{M}\co (M\ot \lambda_{H})\co \rho_{M}:M\rightarrow M$  satisfies  $\rho_{M}\co q_{M}=(M\ot \Pi_{H}^{L})\co\rho_{M}\co q_{M}$ and, as a consequence, $q_{M}$ is  idempotent. Moreover, if $M^{co H}$ (object of coinvariants)  is the image of $q_{M}$ and $p_{M}:M\rightarrow M^{co H}$, $i_{M}:M^{co H}\rightarrow M$ are the morphisms such that $q_{M}=i_{M}\co p_{M}$ and 
 $id_{M^{co H}}=p_{M}\co i_{M}$, 
$$
\setlength{\unitlength}{3mm}
\begin{picture}(30,4)
\put(3,2){\vector(1,0){4}} \put(11,2.5){\vector(1,0){10}}
\put(11,1.5){\vector(1,0){10}} \put(1,2){\makebox(0,0){$M^{co H}$}}
\put(9,2){\makebox(0,0){$M$}} \put(24,2){\makebox(0,0){$M\ot H$}}
\put(5.5,3){\makebox(0,0){$i_{M}$}}
\put(16,3.5){\makebox(0,0){$ \rho_{M}$}}
\put(16,0.15){\makebox(0,0){$(M\ot \Pi_{H}^{L})\co\rho_{M}$}}
\end{picture}
$$
is an equalizer diagram. Also, 
$$
\setlength{\unitlength}{3mm}
\begin{picture}(30,4)
\put(3,2){\vector(1,0){4}} \put(11,2.5){\vector(1,0){10}}
\put(11,1.5){\vector(1,0){10}} \put(1,2){\makebox(0,0){$M^{co H}$}}
\put(9,2){\makebox(0,0){$M$}} \put(24,2){\makebox(0,0){$M\ot H$}}
\put(5.5,3){\makebox(0,0){$i_{M}$}}
\put(16,3.5){\makebox(0,0){$ \rho_{M}$}}
\put(16,0.15){\makebox(0,0){$(M\ot \overline{\Pi}_{H}^{R})\co\rho_{M}$}}
\end{picture}
$$
is  an equalizer diagram. Moreover, the following identities hold (see Remark 4.4 of \cite{Asian}): 
\begin{equation}
\label{new-c5-2-1}
\phi_{M}\circ (q_{M}\ot H)\co \rho_{M}=id_{M},
\end{equation}
\begin{equation}
\label{new-c5-2-2}
\rho_{M}\co \phi_{M}\co (i_{M}\ot H)=(\phi_{M}\ot H)\co (i_{M}\ot \delta_{H}),
\end{equation}
\begin{equation}
\label{new-c5-2-3}
p_{M}\co \phi_{M}\co (i_{M}\ot H)=p_{M}\co \phi_{M}\co (i_{M}\ot \Pi_{H}^{L}).
\end{equation}

On the other hand, the morphism 
$$\nabla_{M}:=(p_{M}\ot H)\co \rho_{M}\co \phi_{M}\co (i_{M}\ot H):M^{co H}\ot H\rightarrow M^{co H}\ot H$$
is idempotent and the equalities 
\begin{equation}
\label{tensor-idempotent-1}
\nabla_{M}=((p_{M}\co \phi_{M})\ot H)\co  (i_{M}\ot \delta_{H}), 
\end{equation}
\begin{equation}
\label{tensor-idempotent-2}
(M^{co H}\ot \delta_{H})\co \nabla_{M}= (\nabla_{M}\ot H)\co (M^{co H}\ot \delta_{H}).
\end{equation}
hold (see Proposition 4.5 of \cite{Asian}). If we define the morphisms
$$\omega_{M}:M^{co H}\ot H\rightarrow M,\;\;\;\;\;
\omega_{M}^{\prime}:M\rightarrow M^{co H}\ot H,$$  by
$\omega_{M}=\phi_{M}\co (i_{M}\otimes H)$ and
$\omega_{M}^{\prime}=(p_{M}\otimes H)\co \rho_{M}$. Then, 
 $\omega_{M}\circ \omega_{M}^{\prime}=id_{M}$ and
$\nabla_{M}=\omega_{M}^{\prime}\co \omega_{M}$. Also, we have a commutative diagram
$$
\unitlength=1mm \special{em:linewidth 0.4pt} \linethickness{0.4pt}
\begin{picture}(64.00,37.00)
\put(10.00,20.00){\vector(1,0){30.00}}
\put(8.00,18.00){\vector(4,-3){16.00}}
\put(27.00,6.00){\vector(1,1){12.00}}
\put(8.00,22.00){\vector(3,2){16.00}}
\put(26.00,32.00){\vector(4,-3){12.00}}

\put(0.00,20.00){\makebox(0,0)[cc]{$M^{co H}\ot H$}}
\put(50.00,20.00){\makebox(0,0)[cc]{$M^{co H}\ot H$}}
\put(24.00,36.00){\makebox(0,0)[cc]{$M$}}
\put(24.00,2.00){\makebox(0,0)[cc]{$M^{co H}\times H$}}
\put(13.00,29.00){\makebox(0,0)[cc]{$\omega_{M}$}}
\put(35.00,30.00){\makebox(0,0)[cc]{$\omega_{M}^{\prime}$}}
\put(9.00,10.00){\makebox(0,0)[cc]{$p_{M^{co H}\ot H}$}}
\put(39.00,9.00){\makebox(0,0)[cc]{$i_{M^{co H}\ot H}$}}
\put(23.00,23.00){\makebox(0,0)[cc]{$\nabla_{M}$}}

\end{picture}
$$
where $M^{co H}\times H$ denotes the image of $\nabla_{M}$ and $p_{M^{co H}\ot H}$,  $i_{M^{co H}\ot H}$ are the morphisms such that $p_{M^{co H}\ot H}\co i_{M^{co H}\ot H}=id_{M^{co H}\times H}$ and $i_{M^{co H}\ot H}\co p_{M^{co H}\ot H}=\nabla_{M}$. Therefore,
the morphism $$\alpha_{M}=p_{M^{co H}\ot H}\circ \omega^{\prime}_{M}:M\rightarrow M^{co H}\times H$$ is an
isomorphism of right $H$-modules (i.e. $\rho_{M^{co H}\times H}\co \alpha_{M}=(\alpha_{M}\ot H)\co \rho_{M}$)
with inverse $\alpha^{-1}_{M}=\omega_{M}\circ i_{M^{co H}\ot H}$. The
comodule structure of $M^{co H}\times H$ is the one induced by
the isomorphism $\alpha_{M}$ and it is equal to
$$
\rho_{M^{co H}\times H}=(p_{M^{co H}\ot H}\ot
H)\co (M^{co H}\ot \delta_{H})\co i_{M^{co H}\ot H}.
$$
As a consequence, the triple $(M^{co H}\times H, \phi_{M^{co H}\times H}, \rho_{M^{co H}\times H})$ where 
$$\phi_{M^{co H}\times H}=p_{M^{co H}\ot H}\co (M^{co H}\ot \mu_{H})\co (i_{M^{co H}\ot H}\ot H),$$
is a right-right $H$-Hopf module (see Proposition 4.8 of \cite{Asian}).

Finally, following Proposition 4.9 of \cite{Asian}, for the isomorphism of right $H$-comodules  $\alpha_{M}$, the triple $(M, \phi_{M}^{\alpha_{M}}, \rho_{M})$  is a right-right $H$-Hopf module  with the same object of coinvariants of  $(M, \phi_{M}, \rho_{M})$. Moreover, the identity  $\phi_{M}^{\alpha_{M}}=\phi_{M}\co (q_{M}\ot \mu_{H})\co (\rho_{M}\ot H)$ holds and 
\begin{equation}
\label{idemp-m-alfa}
q_{M}^{\alpha_{M}}=q_{M},
\end{equation} 
where $q_{M}^{\alpha_{M}}=\phi_{M}^{\alpha_{M}}\co (M\ot \lambda_{H})\co \rho_{M}$ is the idempotent morphism associated to the 
Hopf module $(M, \phi_{M}^{\alpha_{M}}, \rho_{M})$. Finally, if  $\nabla_{M}^{\alpha_{M}}$, denotes the idempotent morphism associated to $(M, \phi_{M}^{\alpha_{M}}, \rho_{M})$, we have that  $\nabla_{M}^{\alpha_{M}}=\nabla_{M}$
and then, for $(M, \phi_{M}^{\alpha_{M}}, \rho_{M})$, the associated isomorphism between $M$ and $M^{co H}\times H$ is $\alpha_{M}$. Finally,  $(\phi_{M}^{\alpha_{M}})^{\alpha_{M}}=
\phi_{M}^{\alpha_{M}}$ holds. Note that the triple $(H, \phi_{H}=\mu_{H}, \rho_{H}=\delta_{H})$ is a right-right $H$-Hopf module and $\phi_{H}^{\alpha_{H}}=\phi_{H}$.

\begin{proposition}
Let $H$ be a weak Hopf quasigroup and let $(M, \phi_{M}, \rho_{M})$ be a right-right  $H$-Hopf module. The following equality holds:
\begin{equation}
\label{(c2)}
\phi_{M}\co (i_{M}\ot \mu_{H})=\phi_{M}\co (i_{M}\ot \mu_{H}) \co (\nabla_{M}\ot H).
\end{equation}
\end{proposition}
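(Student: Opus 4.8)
The plan is to peel off the idempotent $\nabla_{M}$ by means of its factorisation (\ref{tensor-idempotent-1}), reduce everything to a single partial associativity of the action $\phi_{M}$ over the monoid $H_{L}$, and then settle that partial associativity in parallel with the proof of (\ref{monoid-hl-1})--(\ref{monoid-hl-3}).

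First I would rewrite the right hand side. Writing $\nabla_{M}=((p_{M}\co\phi_{M})\ot H)\co(i_{M}\ot\delta_{H})$ as in (\ref{tensor-idempotent-1}), we get $\nabla_{M}\ot H=((p_{M}\co\phi_{M})\ot H\ot H)\co(i_{M}\ot\delta_{H}\ot H)$, and composing with $i_{M}\ot\mu_{H}$ turns $i_{M}\co p_{M}$ into $q_{M}$, so that
$$\phi_{M}\co(i_{M}\ot\mu_{H})\co(\nabla_{M}\ot H)=\phi_{M}\co((q_{M}\co\phi_{M})\ot\mu_{H})\co(i_{M}\ot\delta_{H}\ot H).$$
The decisive simplification is the identity $q_{M}\co\phi_{M}\co(i_{M}\ot H)=\phi_{M}\co(i_{M}\ot\Pi_{H}^{L})$, which I would obtain by expanding $q_{M}=\phi_{M}\co(M\ot\lambda_{H})\co\rho_{M}$, pushing $\rho_{M}$ through $\phi_{M}\co(i_{M}\ot H)$ by (\ref{new-c5-2-2}), and then invoking axiom (b3). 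Substituting it, the right hand side becomes
$$\phi_{M}\co(\phi_{M}\ot\mu_{H})\co(i_{M}\ot\Pi_{H}^{L}\ot H\ot H)\co(M^{co H}\ot\delta_{H}\ot H),$$
that is, $\phi_{M}\co(\phi_{M}\ot H)$ applied to $i_{M}(m')$ acted on first by $\Pi_{H}^{L}(h_{(1)})$ and then by $\mu_{H}(h_{(2)}\ot g)$.

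Second, I would match this against the left hand side. By (\ref{mu-assoc-2}) we have $\mu_{H}=\mu_{H}\co(\Pi_{H}^{L}\ot\mu_{H})\co(\delta_{H}\ot H)$, so $\phi_{M}\co(i_{M}\ot\mu_{H})$ rewrites as $\phi_{M}$ applied to $i_{M}(m')$ and the single element $\mu_{H}(\Pi_{H}^{L}(h_{(1)})\ot\mu_{H}(h_{(2)}\ot g))$. Hence the proposition is equivalent to the equality
$$\phi_{M}\co(\phi_{M}\ot H)\co(i_{M}\ot\Pi_{H}^{L}\ot H)=\phi_{M}\co(i_{M}\ot(\mu_{H}\co(\Pi_{H}^{L}\ot H))),$$
i.e.\ the action may be reassociated precisely when the middle factor lies in $H_{L}$; the leftover bookkeeping is then harmless, since once the two $H$-factors are fused, (\ref{monoid-hl-1}) reassociates $\mu_{H}(\Pi_{H}^{L}(h_{(1)})\ot\mu_{H}(h_{(2)}\ot g))=\mu_{H}(\mu_{H}(\Pi_{H}^{L}(h_{(1)})\ot h_{(2)})\ot g)$ and (\ref{pi-l}) collapses $\mu_{H}(\Pi_{H}^{L}(h_{(1)})\ot h_{(2)})=h$, returning $\phi_{M}\co(i_{M}\ot\mu_{H})$.

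The remaining displayed identity is the main obstacle: it is the module-theoretic counterpart of (\ref{monoid-hl-1})--(\ref{monoid-hl-3}), and its proof cannot use associativity of $\mu_{H}$ or of $\phi_{M}$, both of which fail in general. I would establish it exactly as (\ref{monoid-hl-1})--(\ref{monoid-hl-3}) are derived from (\ref{aux-1-monoid-hl})--(\ref{aux-2-monoid-hl}), replacing the algebra relation (\ref{aux-1-monoid-hl}) by its module avatar (\ref{new-c5-2-2}) (the coaction on $\phi_{M}\co(i_{M}\ot H)$ is free) and the quasi-associativity axioms of $\mu_{H}$ by the module axioms (b3)--(b5); the coinvariance of $i_{M}$, encoded in (\ref{new-c5-2-3}), is what lets one trade a bare $H$-factor for its image under $\Pi_{H}^{L}$ at the crucial step. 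Because $\Pi_{H}^{L}(h_{(1)})$ genuinely lies in $H_{L}=\mathrm{Im}(\Pi_{H}^{L})$, the non-associativity is confined to that sub-monoid, where it is cancelled by the identities already available; this is the only place where real work is needed.
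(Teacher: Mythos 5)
Your opening moves are sound and in fact coincide with the paper's own first half: rewriting the right-hand side through the definition of $\nabla_{M}$ and establishing $q_{M}\co\phi_{M}\co(i_{M}\ot H)=\phi_{M}\co(i_{M}\ot \Pi_{H}^{L})$ from (\ref{new-c5-2-2}) and (b3) reproduces the first five equalities of the paper's computation, arriving at $\phi_{M}\co((\phi_{M}\co(M\ot \Pi_{H}^{L}))\ot\mu_{H})\co(i_{M}\ot\delta_{H}\ot H)$. The gap is in what you do next. The identity you reduce everything to,
\begin{equation*}
\phi_{M}\co(\phi_{M}\ot H)\co(i_{M}\ot \Pi_{H}^{L}\ot H)=\phi_{M}\co(i_{M}\ot(\mu_{H}\co(\Pi_{H}^{L}\ot H))),
\end{equation*}
is, since $\Pi_{H}^{L}=i_{L}\co p_{L}$ with $p_{L}$ split epi, exactly the strongness condition (\ref{(c1)}) precomposed with $i_{M}$ --- that is, the defining equation of the subcategory ${\mathcal SM}^{H}_{H}$, in precisely the instances in which the paper later uses it (the $H_{L}$-module structure on $M^{coH}$, Proposition \ref{aux-prop-iso}, Step 2 of Theorem \ref{principalofpaper}). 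It is not a consequence of the Hopf module axioms: (b3)--(b5) only control \emph{diagonally coupled} double actions, where the element fed to $\lambda_{H}$ or $\Pi_{H}^{L}$ is one leg of $\delta_{H}$ of the element acted by afterwards, and your identity has no such coupling. Indeed, if it held for every Hopf module, the strong-module apparatus would be redundant and Theorem \ref{principalofpaper} would hold for all of ${\mathcal M}^{H}_{H}$, contradicting the paper's stated motivation. Your proposed derivation ``in parallel with (\ref{monoid-hl-1})--(\ref{monoid-hl-3})'' does not go through: for instance, applying (b5) to either side of the target identity merely regenerates another instance of the same uncoupled identity with $g$ replaced by $\Pi_{H}^{L}(g_{(1)})$, so the argument is circular. (Your claim that the proposition is \emph{equivalent} to the displayed identity is also only justified in the direction you do not need; the proposition yields merely its coupled instance.)

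What the paper does instead --- and what a correct proof must do --- is never uncouple. After the fifth equality it expands $\Pi_{H}^{L}$ by (a4-1), uses naturality of $c$ and (\ref{mu-pi-r-var}) to rewrite the resulting expression as $\mu_{H}\co((\mu_{H}\co(\overline{\Pi}_{H}^{R}\ot H))\ot H)$ acting in the second leg, performs the reassociation \emph{inside $H$ only} via $\mu_{H}\co((\mu_{H}\co(\overline{\Pi}_{H}^{R}\ot H))\ot H)=\mu_{H}\co(\overline{\Pi}_{H}^{R}\ot\mu_{H})$ --- legitimate by (\ref{monoid-hl-1}) because $\Pi_{H}^{L}\co\overline{\Pi}_{H}^{R}=\overline{\Pi}_{H}^{R}$ places the left factor in $H_{L}$ --- and then reverses (\ref{mu-pi-r-var}) and naturality to recover the coupled pattern $(\Pi_{H}^{L}\ot H)\co\delta_{H}$ applied to $\mu_{H}(h\ot g)$, which axiom (b5) collapses to $\phi_{M}\co(i_{M}\ot\mu_{H})$. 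The module action itself is never reassociated over $H_{L}$; the only associativity invoked lives in $H$, where it is available. You should rework the second half of your argument along these lines.
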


\begin{proof} The equality holds because
\begin{itemize}
\item[ ]$\hspace{0.38cm}\phi_{M}\co (i_{M}\ot \mu_{H}) \co (\nabla_{M}\ot H)  $
\item[ ]$= \phi_{M}\co (q_{M}\ot \mu_{H}) \co ((\rho_{M}\co \phi_{M}\co (i_{M}\ot H))\ot H) $
\item[ ]$=  \phi_{M}\co (q_{M}\ot \mu_{H}) \co (((\phi_{M}\ot H)\co (i_{M}\ot \delta_{H}))\ot H) $
\item[ ]$=  \phi_{M}\co ((\phi_{M}\co (M\ot \lambda_{H}))\ot H) \co ((\rho_{M}\co \phi_{M}\co (i_{M}\ot H))\ot \mu_{H})\co (M^{coH}\ot \delta_{H}\ot H)$
\item[ ]$= \phi_{M}\co ((\phi_{M}\co (\phi_{M}\ot \lambda_{H})\co (M\ot \delta_{H}))\ot \mu_{H})\co (i_{M}\ot \delta_{H}\ot H)$
\item[ ]$= \phi_{M}\co ((\phi_{M}\co (M\ot \Pi_{H}^{L}))\ot \mu_{H})\co (i_{M}\ot \delta_{H}\ot H) $
\item[ ]$= \phi_{M}\co (\phi_{M}\ot \mu_{H})\co (i_{M}\ot (((\varepsilon_{H}\co \mu_{H})\ot H\ot H)\co \delta_{H\ot H}\co (\eta_{H}\ot H))\ot H) $
\item[ ]$= \phi_{M}\co (\phi_{M}\ot \mu_{H})\co (i_{M}\ot ((H\ot (\varepsilon_{H}\co \mu_{H})\ot H)\co ((c_{H,H}\co \delta_{H}\co \eta_{H})\ot \delta_{H}))\ot H) $
\item[ ]$= \phi_{M}\co (\phi_{M}\ot (\mu_{H}\co ((\mu_{H}\co (\overline{\Pi}_{H}^{R}\ot H))\ot H)))\co (i_{M}\ot 
(c_{H,H}\co \delta_{H}\co \eta_{H})\ot H\ot H)$
\item[ ]$= \phi_{M}\co (\phi_{M}\ot (\mu_{H}\co (\overline{\Pi}_{H}^{R}\ot \mu_{H})))\co (i_{M}\ot 
(c_{H,H}\co \delta_{H}\co \eta_{H})\ot H\ot H) $
\item[ ]$=\phi_{M}\co (\phi_{M}\ot ((  (\varepsilon_{H}\co \mu_{H})\ot H)\co (H\ot \delta_{H})))\co (i_{M}\ot (c_{H,H}\co \delta_{H}\co \eta_{H})\ot\mu_{H}) $
\item[ ]$=\phi_{M}\co (\phi_{M}\ot H)\co (M\ot ((\Pi_{H}^{L}\ot H)\co \delta_{H}))\co  (i_{M}\ot \mu_{H})  $
\item[ ]$=\phi_{M}\co (i_{M}\ot \mu_{H}),  $
\end{itemize}
where the first equality follows by the definition of $\nabla_{M}$, the second and the fourth ones follow by (\ref{new-c5-2-2}), the third one relies on the definition of $q_{M}$, and the fifth one is a consequence of (b3) of Definition \ref{Hopf-module}. In the sixth one we used the definition of $\Pi_{H}^{L}$ and the seventh and the eleventh  ones follow by the naturality of $c$. The equalities eighth and tenth are consequence of (\ref{mu-pi-r-var}), and the ninth one follows by $\mu_{H}\co ((\mu_{H}\co (\overline{\Pi}_{H}^{R}\ot H))\ot H)=\mu_{H}\co (\overline{\Pi}_{H}^{R}\ot \mu_{H}))$ (this equality holds by (\ref{monoid-hl-1}) and by (\ref{pi-composition-1}), more concretely, by $\Pi_{H}^{L}\co\overline{\Pi}_{H}^{R}=\overline{\Pi}_{H}^{R}$). Finally, the last one relies on (b5) of Definition \ref{Hopf-module}.
\end{proof}

\begin{definition}
\label{category}
Let $H$ be a weak Hopf quasigroup and let $(M, \phi_{M}, \rho_{M})$ and  $(N, \phi_{N}, \rho_{N})$ be right-right 
 $H$-Hopf modules. A morphism $f:M\rightarrow N$ in ${\mathcal C}$  is said to be $H$-quasilineal if the following identity holds:
\begin{equation}
\label{quasilineal}
\phi_{N}^{\alpha_{N}}\co (f\ot H)=f\co \phi_{M}^{\alpha_{M}}.
\end{equation}
A morphism of right-right $H$-Hopf modules between $M$ and $N$ is a morphism $f:M\rightarrow N$ in ${\mathcal C}$ such that is both a morphism of right $H$-comodules and $H$-quasilineal. The collection of all right $H$-Hopf modules with their morphisms forms a category  which will be  denoted by ${\mathcal M}^{H}_{H}$. 

\end{definition}

If $(M, \phi_{M}, \rho_{M})$ is an object in ${\mathcal M}^{H}_{H}$, for   $(M^{co H}\times H, \phi_{M^{co H}\times H}, \rho_{M^{co H}\times H})$ the  identity 
\begin{equation}
\label{quasilineal-1}
\phi_{M^{co H}\times H}^{\alpha_{M^{co H}\times H}}=\phi_{M^{co H}\times H}
\end{equation}
holds (see Proposition 4.12 of \cite{Asian}). Then, as a consequence, we can prove (see Theorem 4.13 of \cite{Asian})

\begin{theorem} (Fundamental Theorem of Hopf modules)
\label{fundamental}
Let $H$ be a weak Hopf quasigroup and  assume that $(M, \phi_{M}, \rho_{M})$ is an object in the category ${\mathcal M}^{H}_{H}$. Then, the  right-right $H$-Hopf modules $(M, \phi_{M}, \rho_{M})$ and $(M^{co H}\times H, \phi_{M^{co H}\times H}, \rho_{M^{co H}\times H})$ are isomorphic in ${\mathcal M}^{H}_{H}$.

\end{theorem}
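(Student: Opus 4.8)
The plan is to prove the theorem by showing that the right $H$-comodule isomorphism $\alpha_{M}=p_{M^{co H}\ot H}\co \omega_{M}^{\prime}:M\rightarrow M^{co H}\times H$ constructed above is in fact an isomorphism in the category ${\mathcal M}^{H}_{H}$. Since $\alpha_{M}$ is already invertible in ${\mathcal C}$, with inverse $\alpha_{M}^{-1}=\omega_{M}\co i_{M^{co H}\ot H}$, and since a morphism of ${\mathcal M}^{H}_{H}$ whose underlying ${\mathcal C}$-morphism is invertible automatically has an inverse that is again colinear and $H$-quasilinear, it suffices to verify that $\alpha_{M}$ satisfies the two defining conditions of a morphism in ${\mathcal M}^{H}_{H}$: that it is a morphism of right $H$-comodules and that it is $H$-quasilinear in the sense of (\ref{quasilineal}).

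The colinearity condition is immediate, since $\alpha_{M}$ was built (Proposition 4.8 of \cite{Asian}) precisely as an isomorphism of right $H$-comodules, so that $\rho_{M^{co H}\times H}\co \alpha_{M}=(\alpha_{M}\ot H)\co \rho_{M}$. For the quasilinearity condition I would spell out the identity to be checked, namely
$$\phi_{M^{co H}\times H}^{\alpha_{M^{co H}\times H}}\co (\alpha_{M}\ot H)=\alpha_{M}\co \phi_{M}^{\alpha_{M}},$$
and reduce both sides using the two facts recalled just before the statement. On the left, (\ref{quasilineal-1}) collapses the twisted action on $M^{co H}\times H$ to its own action, giving $\phi_{M^{co H}\times H}\co (\alpha_{M}\ot H)$. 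On the right, the defining formula of the twisted action associated with the comodule isomorphism $\alpha_{M}:M\rightarrow M^{co H}\times H$, that is $\phi_{M}^{\alpha_{M}}=\alpha_{M}^{-1}\co \phi_{M^{co H}\times H}\co (\alpha_{M}\ot H)$, lets $\alpha_{M}$ cancel against $\alpha_{M}^{-1}$, again yielding $\phi_{M^{co H}\times H}\co (\alpha_{M}\ot H)$. Hence both sides coincide and $\alpha_{M}$ is $H$-quasilinear.

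The point of the argument is conceptual rather than computational: the morphisms of ${\mathcal M}^{H}_{H}$ are defined through the \emph{twisted} actions $\phi_{(-)}^{\alpha_{(-)}}$ rather than through the original actions, so the only thing one must recognize is that (\ref{quasilineal-1}) makes the twisted action on $M^{co H}\times H$ agree with $\phi_{M^{co H}\times H}$, after which the very definition of $\phi_{M}^{\alpha_{M}}$ forces the two sides to agree. No further use of the non-associativity of $\mu_{H}$ is required beyond what is already encapsulated in Propositions 4.8 and 4.12 of \cite{Asian}. Once colinearity and quasilinearity of $\alpha_{M}$ are established, $\alpha_{M}$ is an isomorphism in ${\mathcal M}^{H}_{H}$, and therefore $(M, \phi_{M}, \rho_{M})$ and $(M^{co H}\times H, \phi_{M^{co H}\times H}, \rho_{M^{co H}\times H})$ are isomorphic in ${\mathcal M}^{H}_{H}$, as claimed.
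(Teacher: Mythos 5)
Your proposal is correct and follows essentially the same route as the paper: the paper's proof (via Theorem 4.13 of \cite{Asian}) also takes the canonical comodule isomorphism $\alpha_{M}$ and uses (\ref{quasilineal-1}), i.e.\ $\phi_{M^{co H}\times H}^{\alpha_{M^{co H}\times H}}=\phi_{M^{co H}\times H}$, together with the definition $\phi_{M}^{\alpha_{M}}=\alpha_{M}^{-1}\co \phi_{M^{co H}\times H}\co (\alpha_{M}\ot H)$ to conclude that $\alpha_{M}$ is $H$-quasilinear and hence an isomorphism in ${\mathcal M}^{H}_{H}$. Your write-up merely makes explicit the cancellation argument that the paper leaves to the cited reference.
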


From now on we assume that ${\mathcal C}$  admits coequalizers.  With ${\mathcal C}_{H_{L}}$ we will denote the category of right $H_{L}$-modules, i.e., the category whose objects are pairs $(N,\psi_{N})$ with $N$ an object in ${\mathcal C}$ and $
\psi_{N}:N\ot H_{L}\rightarrow N$ a morphism such that $\psi_{N}\co (N\ot \mu_{H_{L}})=\psi_{N}\co (\psi_{N}\ot H_{L})$, $\psi_{N}\co (N\ot\eta_{H_{L}})=id_{N}$. A morphism $f:(N,\psi_{N})\rightarrow (P,\psi_{P})$ in  ${\mathcal C}_{H_{L}}$ is a morphism $f:N\rightarrow P$ in ${\mathcal C}$ such that $\psi_{P}\co (f\ot H)=f\co \psi_{N}.$ Note that the pair $(H, \psi_{H}=\mu_{H}\co (H\ot i_{L}))$ is a right $H_{L}$-module.

Let $(N,\psi_{N})$ be an object in ${\mathcal C}_{H_{L}}$ and consider the coequalizer diagram 

\begin{equation}
\label{coequalizer-1}
\setlength{\unitlength}{1mm}
\begin{picture}(101.00,10.00)
\put(20.00,8.00){\vector(1,0){25.00}}
\put(20.00,4.00){\vector(1,0){25.00}}
\put(62.00,6.00){\vector(1,0){21.00}}
\put(32.00,11.00){\makebox(0,0)[cc]{$\psi_{N}\ot H$ }}
\put(33.00,0.00){\makebox(0,0)[cc]{$N\ot \varphi_{H}
$ }} 
\put(69.00,9.00){\makebox(0,0)[cc]{$n_{N}$ }}
\put(7.00,6.00){\makebox(0,0)[cc]{$ N\otimes H_{L}\ot H$ }}
\put(54.00,6.00){\makebox(0,0)[cc]{$ N\ot H$ }}
\put(96.00,6.00){\makebox(0,0)[cc]{$N\ot_{H_{L}} H $ }}
\end{picture}
\end{equation}
where $\varphi_{H}=\mu_{H}\co (i_{L}\ot H)$. By (\ref{aux-1-monoid-hl}) we have 
$$(n_{N}\ot H)\co (\psi_{N}\ot \delta_{H})=((n_{N}\co (N\ot \varphi_{H}))\ot H)\co (N\ot H_{L}\ot \delta_{H})=
(n_{N}\ot H)\co (N\ot (\delta_{H}\co \varphi_{H}))$$ 
and, as a consequence, there exists a unique morphism $\rho_{N\ot_{H_{L}} H}:N\ot_{H_{L}} H\rightarrow (N\ot_{H_{L}} H)\ot H$ such that 
\begin{equation}
\label{comodule}
\rho_{N\ot_{H_{L}} H}\co n_{N}=(n_{N}\ot H)\co (N\ot \delta_{H}).
\end{equation}
The pair $(N\ot_{H_{L}} H, \rho_{N\ot_{H_{L}} H})$ is a right $H$-comodule. Indeed: Trivially, $((N\ot_{H_{L}} H)\ot \varepsilon_{H})\co 
\rho_{N\ot_{H_{L}} H}=id_{N\ot_{H_{L}} H}$ because composing with $n_{N}$ we have 
$$((N\ot_{H_{L}} H)\ot \varepsilon_{H})\co 
\rho_{N\ot_{H_{L}} H}\co n_{N}=(n_{N}\ot \varepsilon_{H})\co (N\ot \delta_{H})=n_{N}.$$
Moreover, $(\rho_{N\ot_{H_{L}} H}\ot H)\co \rho_{N\ot_{H_{L}} H}=((N\ot_{H_{L}} H)\ot \delta_{H})\co \rho_{N\ot_{H_{L}} H}$ follows from 
$$(\rho_{N\ot_{H_{L}} H}\ot H)\co \rho_{N\ot_{H_{L}} H}\co n_{N}=(n_{N}\ot \delta_{H})\co (N\ot \delta_{H})=((N\ot_{H_{L}} H)\ot \delta_{H})\co \rho_{N\ot_{H_{L}} H}\co n_{N}.$$

On the other hand, by (\ref{monoid-hl-1}) we have 
$$n_{N}\co (\psi_{N}\ot \mu_{H})=n_{N}\co (N\ot (\mu_{H}\co (i_{L}\ot \mu_{H})))=n_{N}\co (N\ot (\mu_{H}\co (\varphi_{H}\ot H))),$$
and then, if the functor $-\ot H$ preserves coequalizers, there exists a unique morphism 
$$\phi_{N\ot_{H_{L}} H}:(N\ot_{H_{L}} H)\ot H\rightarrow N\ot_{H_{L}} H$$ such that 
\begin{equation}
\label{quasi-module}
\phi_{N\ot_{H_{L}} H}\co (n_{N}\ot H)=n_{N}\co (N\ot \mu_{H}).
\end{equation}

Trivially, $\phi_{N\ot_{H_{L}} H}\co ((N\ot_{H_{L}} H)\ot \eta_{H})=id_{N\ot_{H_{L}} H}$ because 
$$\phi_{N\ot_{H_{L}} H}\co (n_{N}\ot \eta_{H})=n_{N}\co (N\ot (\mu_{H}\co (H\ot \eta_{H})))=n_{N}.$$

Then, if the functor $-\ot H$ preserves coequalizers, the triple $(N\ot_{H_{L}} H, \phi_{N\ot_{H_{L}} H}, \rho_{N\ot_{H_{L}} H})$ is a right-right $H$-Hopf module. Indeed: By the previous reasoning  conditions 
(b1) and (b2-1) of Definition \ref{Hopf-module} hold. Composing with $n_{N}\ot H$ and using (\ref{comodule}), (\ref{quasi-module}) and (a1) of Definition \ref{Weak-Hopf-quasigroup} we have 
$$\rho_{N\ot_{H_{L}} H}\co \phi_{N\ot_{H_{L}} H}\co (n_{N}\ot H)=(n_{N}\ot H)\co (N\ot (\delta_{H}\co \mu_{H}))=(n_{N}\ot H)\co (N\ot ((\mu_{H}\ot \mu_{H})\co \delta_{H\ot H})$$
$$= (\phi_{N\ot_{H_{L}} H}\ot \mu_{H})\co ((N\ot_{H_{L}} H)\ot c_{H,H}\ot H)\co  ((\rho_{N\ot_{H_{L}} H}\co n_{N})\ot \delta_{H}),$$
and then (b2-2) of Definition \ref{Hopf-module} holds. Also, by (a4-6) of Definition \ref{Weak-Hopf-quasigroup}   and (\ref{quasi-module}) we obtain 
$$\phi_{N\ot_{H_{L}} H}\co (\phi_{N\ot_{H_{L}} H}\ot \lambda_{H})\co (n_{N}\ot \delta_{H})=
n_{N}\co (N\ot (\mu_{H}\co (\mu_{H}\ot \lambda_{H})\co (H\ot \delta_{H})))=n_{N}\co (N\ot (\mu_{H}\co (H\ot \Pi_{H}^{L})))$$
$$=\phi_{N\ot_{H_{L}} H}\co (n_{N}\ot \Pi_{H}^{L})$$
and then (b3) of Definition \ref{Hopf-module} holds. Similarly, by (\ref{quasi-module}) and (a4-7) of Definition \ref{Weak-Hopf-quasigroup}
we get (b4) of Definition \ref{Hopf-module}. The equality (b5) of this definition is a consequence of (\ref{quasi-module}) and (\ref{mu-assoc-1}).

Note that, by (\ref{comodule}), (\ref{quasi-module}), we obtain that 
\begin{equation}
\label{idem-strong}
q_{N\ot_{H_{L}} H}\co n_{N}=n_{N}\co (N\ot \Pi_{H}^{L}).
\end{equation}
Also, 
\begin{equation}
\label{action-induction}
\phi_{N\ot_{H_{L}} H}^{\alpha_{N\ot_{H_{L}} H}}=\phi_{N\ot_{H_{L}} H}
\end{equation}
because by 
(\ref{comodule}), (\ref{quasi-module})  and (\ref{mu-assoc-2}), 
$$\phi_{N\ot_{H_{L}} H}^{\alpha_{N\ot_{H_{L}} H}}\co (n_{N}\ot H)=n_{N}\co 
(N\ot (\mu_{H}\co (\Pi_{H}^{L}\ot \mu_{H})\co (\delta_{H}\ot H)))=n_{N}\co 
(N\ot \mu_{H})=\phi_{N\ot_{H_{L}} H}\co (n_{N}\ot H).$$

On the other hand, if $f:N\rightarrow P$ is a morphism in ${\mathcal C}_{H_{L}}$, we have that 
$$n_{P}\co (f\ot H)\co (\psi_{N}\ot H)=n_{P}\co (f\ot H)\co (N\ot \varphi_{H})$$
and, as a consequence, there exists an unique morphism $f\ot_{H_{L}}H:N\ot_{H_{L}} H\rightarrow P\ot_{H_{L}} H$ such that 
\begin{equation}
\label{mor-induction}
n_{P}\co (f\ot H)=(f\ot_{H_{L}}H)\co n_{N}.
\end{equation}
The morphism $f\ot_{H_{L}}H$
is a morphism in ${\mathcal M}^{H}_{H}$ because by (\ref{comodule}), (\ref{quasi-module}), (\ref{action-induction}) and (\ref{mor-induction}) 
$$\rho_{P\ot_{H_{L}} H}\co (f\ot_{H_{L}}H) \co n_{N}=(n_{P}\ot H)\co (f\ot \delta_{H})=((f\ot_{H_{L}}H)\ot H)\co 
\rho_{N\ot_{H_{L}} H}\co n_{N}$$
and 
$$  \phi_{P\ot_{H_{L}} H}^{\alpha_{P\ot_{H_{L}} H}}\co ((f\ot_{H_{L}}H)\ot H)\co (n_{N}\ot H)=\phi_{P\ot_{H_{L}} H}\co ((f\ot_{H_{L}}H)\ot H)\co (n_{N}\ot H)=n_{P}\co (f\ot \mu_{H})$$ $$=(f\ot_{H_{L}}H)\co 
\phi_{N\ot_{H_{L}} H}\co (n_{N}\ot H)=(f\ot_{H_{L}}H)\co 
\phi_{N\ot_{H_{L}} H}^{\alpha_{N\ot_{H_{L}} H}}\co (n_{N}\ot H).$$

Summarizing, we have the following proposition:

\begin{proposition}
\label{induction-functor}
Let $H$ be a weak Hopf quasigroup such that the functor $-\ot H$ preserves coequalizers. There exists a functor $$F:{\mathcal C}_{H_{L}}\rightarrow  {\mathcal M}^{H}_{H},$$ called the induction functor, defined on objects by $F((N,\psi_{N}))=(N\ot_{H_{L}} H, \phi_{N\ot_{H_{L}} H}, \rho_{N\ot_{H_{L}} H})$ and for morphisms by $F(f)=f\ot_{H_{L}}H$.
\end{proposition}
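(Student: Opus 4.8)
The plan is to observe that the construction carried out in the paragraphs preceding the statement already establishes everything except the functoriality of $F$. Indeed, for an object $(N,\psi_{N})$ of ${\mathcal C}_{H_{L}}$ the triple $(N\ot_{H_{L}} H, \phi_{N\ot_{H_{L}} H}, \rho_{N\ot_{H_{L}} H})$ has been shown to be a right-right $H$-Hopf module (the comodule axioms following from (\ref{comodule}), the module-type axioms (b2-1)--(b5) from (\ref{quasi-module}) together with (a4-6), (a4-7) of Definition \ref{Weak-Hopf-quasigroup} and (\ref{mu-assoc-1})), so $F$ is well defined on objects. Likewise, for a morphism $f:N\rightarrow P$ in ${\mathcal C}_{H_{L}}$ the induced morphism $f\ot_{H_{L}}H$, defined by (\ref{mor-induction}), has been shown to be a morphism in ${\mathcal M}^{H}_{H}$. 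Thus the only task remaining for the proof is to verify that $F$ preserves identities and composition.

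The key technical point throughout is that the coequalizer morphism $n_{N}:N\ot H\rightarrow N\ot_{H_{L}} H$ appearing in (\ref{coequalizer-1}) is an epimorphism, so that any identity between morphisms out of $N\ot_{H_{L}} H$ may be checked after precomposition with $n_{N}$. First I would treat identities: applying the defining relation (\ref{mor-induction}) to $f=id_{N}$ gives
$$(id_{N}\ot_{H_{L}}H)\co n_{N}=n_{N}\co (id_{N}\ot H)=n_{N}=id_{N\ot_{H_{L}} H}\co n_{N},$$
and cancelling the epimorphism $n_{N}$ yields $id_{N}\ot_{H_{L}}H=id_{N\ot_{H_{L}} H}$, that is, $F(id_{(N,\psi_{N})})=id_{F((N,\psi_{N}))}$.

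For composition, given morphisms $f:N\rightarrow P$ and $g:P\rightarrow Q$ in ${\mathcal C}_{H_{L}}$, I would compute both sides after precomposing with $n_{N}$. Using (\ref{mor-induction}) twice,
$$(g\ot_{H_{L}}H)\co (f\ot_{H_{L}}H)\co n_{N}=(g\ot_{H_{L}}H)\co n_{P}\co (f\ot H)=n_{Q}\co (g\ot H)\co (f\ot H)=n_{Q}\co ((g\co f)\ot H),$$
while (\ref{mor-induction}) applied to $g\co f$ gives $((g\co f)\ot_{H_{L}}H)\co n_{N}=n_{Q}\co ((g\co f)\ot H)$. Since $n_{N}$ is epi, the two induced morphisms coincide, so $F(g\co f)=F(g)\co F(f)$.

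There is no genuine obstacle beyond bookkeeping: all the Hopf-module axioms for the image objects and the $H$-colinearity and $H$-quasilinearity (\ref{quasilineal}) of the induced morphisms were dispatched in the discussion preceding the statement, and functoriality then follows formally from the universal property of the coequalizers, concretely from $n_{N}$ being an epimorphism. The only point requiring a little care is that every defining identity for $F$ is phrased after composition with $n_{N}\ot H$ or $n_{N}$, so each verification must be performed at the level of $N\ot H$ and then transported across the epimorphism $n_{N}$.
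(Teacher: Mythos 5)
Your proposal is correct and follows essentially the same route as the paper: the paper's ``proof'' is precisely the construction in the paragraphs preceding the statement (the proposition is introduced with ``Summarizing, we have the following proposition''), which establishes the Hopf module structure on $N\ot_{H_{L}}H$ and the fact that $f\ot_{H_{L}}H$ lies in ${\mathcal M}^{H}_{H}$. Your only addition is the explicit verification that $F$ preserves identities and composition via the epimorphism property of the coequalizer $n_{N}$, a routine point the paper leaves implicit, and your verification of it is correct.
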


\begin{definition}
\label{strong-category}
Let $H$ be a weak Hopf quasigroup. With  ${\mathcal SM}^{H}_{H}$ we will denote the full subcategory of 
${\mathcal M}^{H}_{H}$ whose objects are the right-right $H$-Hopf modules $(M, \phi_{M}, \rho_{M})$ such that the following equality hold: 
\begin{equation}
\label{(c1)}
\phi_{M}\co ((\phi_{M}\co (M\ot i_{L}))\ot H)=\phi_{M}\co (M\ot (\mu_{H}\co (i_{L}\ot H))). 
\end{equation}
The objects of ${\mathcal SM}^{H}_{H}$ will be called right-right strong $H$-Hopf modules.

By  (\ref{monoid-hl-2}) we obtain that $(H, \phi_{H}=\mu_{H}, \rho_{H}=\delta_{H})$ is a  right-right strong $H$-Hopf module. Note that if $H$ is a Hopf quasigroup, (\ref{(c1)}) holds  because $i_{L}=\eta_{H}$ (see Theorem 1 of \cite{EmilioPura}). Then in this particular setting ${\mathcal SM}^{H}_{H}={\mathcal M}^{H}_{H}$. Also the previous equality holds trivially for any Hopf module associated to a weak (braided) Hopf algebra (see Section 3 of \cite{IND}). 
\end{definition}

\begin{proposition}
\label{fact-induction-functor}
Let $H$ be a weak Hopf quasigroup such that the functor $-\ot H$ preserves coequalizers. The induction functor $F:{\mathcal C}_{H_{L}}\rightarrow  {\mathcal M}^{H}_{H}$ factorizes through the category ${\mathcal SM}^{H}_{H}$.
\end{proposition}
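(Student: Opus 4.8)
The plan is to show that for any object $(N,\psi_{N})$ in ${\mathcal C}_{H_{L}}$, the induced Hopf module $F((N,\psi_{N}))=(N\ot_{H_{L}} H, \phi_{N\ot_{H_{L}} H}, \rho_{N\ot_{H_{L}} H})$ satisfies the strongness condition (\ref{(c1)}), i.e. lies in ${\mathcal SM}^{H}_{H}$. Since ${\mathcal SM}^{H}_{H}$ is a full subcategory of ${\mathcal M}^{H}_{H}$ and $F$ already lands in ${\mathcal M}^{H}_{H}$ by Proposition \ref{induction-functor}, verifying that the objects $F((N,\psi_{N}))$ satisfy (\ref{(c1)}) is enough: the morphism part of the factorization is automatic. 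So the entire content reduces to a single identity check.

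The key computational device is that $n_{N}$ is an epimorphism (it is a coequalizer map in (\ref{coequalizer-1})), so to prove the equality of the two morphisms in (\ref{(c1)}) for $M=N\ot_{H_{L}} H$ it suffices to precompose both sides with $n_{N}\ot H\ot H$ and verify equality there. The first step is therefore to rewrite each side of (\ref{(c1)}), with $\phi_M$ replaced by $\phi_{N\ot_{H_{L}} H}$, after precomposition with $n_{N}\ot H\ot H$. Using the defining relation (\ref{quasi-module}), namely $\phi_{N\ot_{H_{L}} H}\co (n_{N}\ot H)=n_{N}\co (N\ot \mu_{H})$, I would push the $\phi_{N\ot_{H_{L}} H}$'s through $n_{N}$ and collapse them into products $\mu_{H}$ acting on the $H$-factor. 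The left-hand side should become $n_{N}\co (N\ot (\mu_{H}\co ((\mu_{H}\co (H\ot i_{L}))\ot H)))$ and the right-hand side $n_{N}\co (N\ot (\mu_{H}\co (H\ot (\mu_{H}\co (i_{L}\ot H)))))$.

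At that point the two sides differ precisely by the associativity pattern $\mu_{H}\co ((\mu_{H}\co (H\ot i_{L}))\ot H)=\mu_{H}\co (H\ot (\mu_{H}\co (i_{L}\ot H)))$ applied inside the argument $N\ot(-)$. This is exactly equation (\ref{monoid-hl-2}) from the earlier list of identities, which expresses the weak associativity of $\mu_{H}$ against the image of $\Pi_{H}^{L}$. Invoking (\ref{monoid-hl-2}) closes the gap. The main thing to be careful about is the bookkeeping of the tensor factors when distributing $n_{N}$: one must make sure the $i_{L}$ and the two free $H$-slots land in the positions matching (\ref{monoid-hl-2}), and that the functoriality of $-\ot H$ (needed for $\phi_{N\ot_{H_{L}} H}$ to exist at all) is being used consistently.

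I do not expect any serious obstacle here; the proof is short precisely because the strongness axiom (\ref{(c1)}) is modeled on the defining associativity relation (\ref{monoid-hl-2}) of the monoid $H_{L}$, and the induced action $\phi_{N\ot_{H_{L}} H}$ is built directly from $\mu_{H}$ via (\ref{quasi-module}). The only real work is confirming that the epimorphism $n_{N}$ lets us check (\ref{(c1)}) after precomposition and then recognizing the resulting identity as (\ref{monoid-hl-2}). Once that factorization on objects is established, fullness of ${\mathcal SM}^{H}_{H}$ in ${\mathcal M}^{H}_{H}$ finishes the proof.
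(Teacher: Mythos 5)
Your proposal is correct and follows essentially the same route as the paper: precompose both sides of (\ref{(c1)}) for $M=N\ot_{H_{L}}H$ with the coequalizer map, collapse both occurrences of $\phi_{N\ot_{H_{L}}H}$ into $\mu_{H}$ via (\ref{quasi-module}), and close the gap with (\ref{monoid-hl-2}), exactly as the paper does. One small precision the paper makes explicit and you should too: the precomposition is with $n_{N}\ot H_{L}\ot H$ (not $n_{N}\ot H\ot H$, since the middle factor of the domain is $H_{L}$), and to know this morphism is epimorphic it is not enough that $n_{N}$ is epi --- one uses that $-\ot H_{L}$ preserves coequalizers, which follows from the hypothesis on $-\ot H$ because $-\ot H_{L}$ is a retract of $-\ot H$ through $i_{L}$ and $p_{L}$.
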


\begin{proof}
We must show that for any $(N,\psi_{N})\in {\mathcal C}_{H_{L}}$, the triple $(N\ot_{H_{L}} H, \phi_{N\ot_{H_{L}} H}, \rho_{N\ot_{H_{L}} H})$ is an object in ${\mathcal SM}^{H}_{H}$. First note that if $-\ot H$ preserves coequalizers then $-\ot H_{L}$ preserves coequalizers, and (\ref{(c1)})  holds because by (\ref{quasi-module}) and (\ref{monoid-hl-2})
$$\phi_{N\ot_{H_{L}} H}\co ((\phi_{N\ot_{H_{L}} H}\co (n_{N}\ot i_{L}))\ot H)=n_{N}\co (N\ot (\mu_{H}\co 
((\mu_{H}\co (H\ot i_{L}))\ot H)))$$
$$=n_{N}\co (N\ot (\mu_{H}\co 
(H\ot (\mu_{H}\co  (i_{L}\ot H)))))=\phi_{N\ot_{H_{L}} H}\co (n_{N}\ot (\mu_{H}\co (i_{L}\ot H))).$$
\end{proof}

Let $(M,\phi_{M}, \rho_{M})$ be a right-right $H$-Hopf module. If $M$ is strong, the pair 
$$(M^{coH}, \psi_{M^{co H}}=p_{M}\co \phi_{M}\co (i_{M}\ot i_{L}))$$ is a right $H_{L}$-module. Indeed: Trivially $\psi_{M^{co H}}\co (M^{co H}\ot \eta_{H_{L}})=id_{M^{co H}}.$ Moreover, 
\begin{itemize}
\item[ ]$\hspace{0.38cm}  \psi_{M^{co H}} \co (\psi_{M^{co H}}\ot H_{L})$
\item[ ]$=p_{M}\co \phi_{M}\co ((\phi_{M}\co (\phi_{M}\ot \lambda_{H})\co (i_{M} \ot (\delta_{H}\co i_{L})))\ot i_{L})$
\item[ ]$=p_{M}\co \phi_{M}\co ((\phi_{M}\co (i_{M}\ot i_{L}))\ot i_{L}) $
\item[ ]$=p_{M}\co \phi_{M}\co ( i_{M}\ot (\mu_{H}\co (i_{L}\ot i_{L}))) $
\item[ ]$=\psi_{M^{co H}} \co (M^{co H}\ot \mu_{H_{L}}).$
\end{itemize}

The first equality follows by (\ref{new-c5-2-2}), the second one by (b3) of Definition \ref{Hopf-module}, the third one by (\ref{(c1)}), and the last one by the properties of $\mu_{H_{L}}$.

Let $g:M\rightarrow T$ be a morphism in  ${\mathcal SM}^{H}_{H}$. Using the comodule morphism condition we obtain that $\rho_{T}\circ g\circ i_{M}=(T\ot \overline{\Pi}_{H}^{R})\co \rho_{T}\circ g\circ i_{M}$ and this implies that there exists a unique morphism $g^{coH}:M^{coH}\rightarrow T^{coH}$ such that 
\begin{equation}
\label{coinv-morphism}
i_{T}\co g^{coH}=g\co i_{M}.
\end{equation}
Then, by (\ref{coinv-morphism}) and (\ref{idemp-m-alfa}),
$$i_{T}\co g^{coH}\co p_{M}=g\co q_{M}=g\co q_{M}^{\alpha_{M}}=q_{T}^{\alpha_{T}}\co g=q_{T}\co g$$
and, as a consequence,
\begin{equation}
\label{coinv-morphism-1}
g^{coH}\co p_{M}=p_{T}\co g.
\end{equation}

On the other hand, for any right-right $H$-Hopf module $M$,  by (\ref{idemp-m-alfa}), we know that $\nabla_{M}=\nabla_{M}^{\alpha_{M}}$. Then composing with $\phi_{M}\co (i_{M}\ot H)$ in this equality and using (\ref{new-c5-2-1}) we get the equality
\begin{equation}
\label{coinv-morphism-2}
\phi_{M}\co (i_{M}\ot H)=\phi_{M}^{\alpha_{M}}\co (i_{M}\ot H).
\end{equation}

Therefore, by (\ref{coinv-morphism-1}), (\ref{coinv-morphism-2}) and (\ref{coinv-morphism}) we obtain that 
$g^{coH}$ is a morphism of right $H_{L}$-modules because:
$$ g^{coH}\co \psi_{M^{coH}}=p_{T}\co g\co \phi_{M}\co (i_{M}\ot i_{L})=p_{T}\co g\co \phi_{M}^{\alpha_{M}}\co (i_{M}\ot i_{L})=p_{T}\co \phi_{T}^{\alpha_{T}}\co ((g\co i_{M})\ot i_{L})$$
$$=p_{T}\co \phi_{T}^{\alpha_{T}}\co ((i_{T}\co g^{coH})\ot i_{L})=p_{T}\co \phi_{T}\co ((i_{T}\co g^{coH})\ot i_{L})=\psi_{T^{coH}}\co (g^{coH}\ot H_{L}). $$

Thus, in this setting we have the following result.

\begin{proposition}
\label{coinvariant-functor}
Let $H$ be a weak Hopf quasigroup. There exists a functor $$G:{\mathcal SM}^{H}_{H}\rightarrow  {\mathcal C}_{H_{L}},$$ called the  functor of coinvariants, defined on objects by $G((M,\phi_{M},\rho_{M}))=(M^{coH}, \psi_{M^{coH}})$ and for morphisms by $G(g)=g^{coH}$.
\end{proposition}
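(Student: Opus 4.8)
The plan is to observe that the two genuinely substantive verifications needed for $G$ to be a functor have already been carried out in the discussion preceding the statement, so that the proof reduces to checking well-definedness together with the two functoriality axioms. Concretely, the computation just before the statement shows that for a strong Hopf module $(M,\phi_{M},\rho_{M})$ the pair $(M^{coH},\psi_{M^{coH}})$ with $\psi_{M^{coH}}=p_{M}\co\phi_{M}\co(i_{M}\ot i_{L})$ is a right $H_{L}$-module, so $G$ is well defined on objects; and the final display preceding the statement shows that for $g$ in ${\mathcal SM}^{H}_{H}$ the induced map satisfies $g^{coH}\co\psi_{M^{coH}}=\psi_{T^{coH}}\co(g^{coH}\ot H_{L})$, so $G(g)=g^{coH}$ is a morphism in ${\mathcal C}_{H_{L}}$. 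Thus it remains only to establish that $g^{coH}$ is genuinely well defined (existence and uniqueness) and that the assignment $g\mapsto g^{coH}$ respects identities and composition.

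For well-definedness I would first record that $i_{M}$ is a split monomorphism, since $p_{M}\co i_{M}=id_{M^{coH}}$, and likewise for $i_{T}$. The existence of $g^{coH}$ with $i_{T}\co g^{coH}=g\co i_{M}$ follows from the equalizer property defining $i_{T}$: using that $g$ is a comodule morphism one verifies $\rho_{T}\co g\co i_{M}=(T\ot\overline{\Pi}_{H}^{R})\co\rho_{T}\co g\co i_{M}$, so that $g\co i_{M}$ equalizes $\rho_{T}$ and $(T\ot\overline{\Pi}_{H}^{R})\co\rho_{T}$ and hence factors uniquely through the equalizer $i_{T}$; this is precisely the content of (\ref{coinv-morphism}). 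Uniqueness of $g^{coH}$ is then immediate from the monomorphism property of $i_{T}$.

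Functoriality now follows formally from (\ref{coinv-morphism}) together with this monomorphism property. For identities, $i_{M}\co id_{M^{coH}}=i_{M}=id_{M}\co i_{M}$, so by uniqueness $(id_{M})^{coH}=id_{M^{coH}}$, that is $G(id_{M})=id_{G(M)}$. For composition, given $g:M\rightarrow T$ and $h:T\rightarrow S$ in ${\mathcal SM}^{H}_{H}$, I would compute
$$i_{S}\co(h\co g)^{coH}=h\co g\co i_{M}=h\co i_{T}\co g^{coH}=i_{S}\co h^{coH}\co g^{coH},$$
where the outer equalities use (\ref{coinv-morphism}) for $h\co g$ and for $h$, and the middle one uses it for $g$; since $i_{S}$ is monic this yields $(h\co g)^{coH}=h^{coH}\co g^{coH}$, i.e. $G(h\co g)=G(h)\co G(g)$.

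The main point to be careful about, rather than a genuine obstacle, is that all the nontrivial input (the strong condition (\ref{(c1)}) and the identity (\ref{coinv-morphism-2})) has been absorbed into the two claims established before the statement, so that the proof of the proposition itself is purely formal once one notes that the coinvariant maps are characterized by the universal property encoded in (\ref{coinv-morphism}) and that the structural morphisms $i_{M}$ are monomorphisms. I expect no computational difficulty; the only step worth spelling out is the factorization argument securing the existence of $g^{coH}$, since that is exactly where the hypothesis that $g$ is a comodule morphism enters.
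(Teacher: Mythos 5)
Your proposal is correct and matches the paper's treatment: the paper states this proposition as a summary of the discussion immediately preceding it (the module structure on $M^{coH}$, the existence and uniqueness of $g^{coH}$ via the equalizer property of $i_{T}$, and the $H_{L}$-linearity of $g^{coH}$), leaving the purely formal functoriality checks implicit. Your only addition is to spell out preservation of identities and composition via the split-monomorphism property of $i_{T}$, which is exactly the routine verification the paper omits.
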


\begin{proposition}
\label{aux-prop-iso}
Let $H$ be a weak Hopf quasigroup such that the functor $-\ot H$ preserves coequalizers. For any $(M,\phi_{M},\rho_{M})\in {\mathcal SM}^{H}_{H}$, the objects 
$M^{coH}\ot_{H_{L}}H$ and $M^{coH}\times H$ are isomorphic right-right $H$-Hopf modules.
\end{proposition}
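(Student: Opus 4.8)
The plan is to realize both objects as retracts of $M^{coH}\ot H$ and to show that the coequalizer projection $n_{M^{coH}}:M^{coH}\ot H\rightarrow M^{coH}\ot_{H_{L}}H$ of (\ref{coequalizer-1}) and the idempotent projection $p_{M^{coH}\ot H}:M^{coH}\ot H\rightarrow M^{coH}\times H$ cut out the same quotient. Concretely, I would put $b=n_{M^{coH}}\co i_{M^{coH}\ot H}:M^{coH}\times H\rightarrow M^{coH}\ot_{H_{L}}H$, and I would obtain $a:M^{coH}\ot_{H_{L}}H\rightarrow M^{coH}\times H$ from the universal property of the coequalizer applied to $p_{M^{coH}\ot H}$, so that $a\co n_{M^{coH}}=p_{M^{coH}\ot H}$. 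Granting this, $a\co b=p_{M^{coH}\ot H}\co i_{M^{coH}\ot H}=id$ is immediate, and $b\co a\co n_{M^{coH}}=n_{M^{coH}}\co \nabla_{M}$. Thus the whole problem reduces to two identities: $(\mathrm{I})$ that $p_{M^{coH}\ot H}$ coequalizes the pair $(\psi_{M^{coH}}\ot H,\,M^{coH}\ot \varphi_{H})$, and $(\mathrm{II})$ that $n_{M^{coH}}\co \nabla_{M}=n_{M^{coH}}$ (from which $b\co a=id$ follows, $n_{M^{coH}}$ being epi).

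For $(\mathrm{I})$, since $\alpha_{M}\co\omega_{M}=p_{M^{coH}\ot H}\co\omega_{M}^{\prime}\co\omega_{M}=p_{M^{coH}\ot H}\co\nabla_{M}=p_{M^{coH}\ot H}$ and $\alpha_{M}$ is an isomorphism, it suffices to check $\omega_{M}\co(\psi_{M^{coH}}\ot H)=\omega_{M}\co(M^{coH}\ot\varphi_{H})$, an identity of morphisms into $M$. The right-hand side is $\phi_{M}\co(i_{M}\ot(\mu_{H}\co(i_{L}\ot H)))$, which by the strong hypothesis (\ref{(c1)}) equals $\phi_{M}\co((\phi_{M}\co(i_{M}\ot i_{L}))\ot H)$; the left-hand side is $\phi_{M}\co((i_{M}\co\psi_{M^{coH}})\ot H)$, so I would first prove the auxiliary identity $i_{M}\co\psi_{M^{coH}}=\phi_{M}\co(i_{M}\ot i_{L})$ using (\ref{new-c5-2-2}), axiom (b3) of Definition \ref{Hopf-module}, and the relation $\Pi_{H}^{L}\co i_{L}=i_{L}$. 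The two sides then coincide. I expect this to be the main obstacle: it is exactly the point where the non-associativity of $\mu_{H}$ would otherwise obstruct the argument, and it is precisely condition (\ref{(c1)}) that repairs it, which is the reason one must restrict to ${\mathcal SM}^{H}_{H}$.

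For $(\mathrm{II})$ I would first record the relation $p_{M}\co\phi_{M}\co(i_{M}\ot H)=\psi_{M^{coH}}\co(M^{coH}\ot p_{L})$, which follows from (\ref{new-c5-2-3}) and $\Pi_{H}^{L}=i_{L}\co p_{L}$. Rewriting $\nabla_{M}=((p_{M}\co\phi_{M})\ot H)\co(i_{M}\ot\delta_{H})$ through it gives $\nabla_{M}=(\psi_{M^{coH}}\ot H)\co(M^{coH}\ot((p_{L}\ot H)\co\delta_{H}))$. Applying the coequalizer identity $n_{M^{coH}}\co(\psi_{M^{coH}}\ot H)=n_{M^{coH}}\co(M^{coH}\ot\varphi_{H})$ and then simplifying $\varphi_{H}\co(p_{L}\ot H)\co\delta_{H}=\mu_{H}\co(\Pi_{H}^{L}\ot H)\co\delta_{H}=id_{H}$, where the last equality is $\Pi_{H}^{L}\ast id_{H}=id_{H}$ from (\ref{pi-l}), yields $n_{M^{coH}}\co\nabla_{M}=n_{M^{coH}}$, as needed.

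It remains to verify that $a$ is a morphism in ${\mathcal M}^{H}_{H}$. For the comodule condition I would compose with the epimorphism $n_{M^{coH}}$ and reduce, using the formula for $\rho_{M^{coH}\times H}$, the identity (\ref{tensor-idempotent-2}) and $p_{M^{coH}\ot H}\co\nabla_{M}=p_{M^{coH}\ot H}$, to $\rho_{M^{coH}\times H}\co p_{M^{coH}\ot H}=(p_{M^{coH}\ot H}\ot H)\co(M^{coH}\ot\delta_{H})$. For $H$-quasilinearity, (\ref{quasilineal-1}) and (\ref{action-induction}) show that both twisted actions agree with the untwisted ones, so the condition becomes the plain module-morphism identity; composing with the epimorphism $n_{M^{coH}}\ot H$ (here the hypothesis that $-\ot H$ preserves coequalizers guarantees this is epi) and invoking (\ref{quasi-module}) together with the definition of $\phi_{M^{coH}\times H}$, it reduces to $p_{M^{coH}\ot H}\co(M^{coH}\ot\mu_{H})\co(\nabla_{M}\ot H)=p_{M^{coH}\ot H}\co(M^{coH}\ot\mu_{H})$, which is precisely (\ref{(c2)}) transported along $\alpha_{M}$ (using $\phi_{M}\co(i_{M}\ot\mu_{H})=\omega_{M}\co(M^{coH}\ot\mu_{H})$ and $\alpha_{M}\co\omega_{M}=p_{M^{coH}\ot H}$). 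Being a morphism of ${\mathcal M}^{H}_{H}$ invertible in ${\mathcal C}$, $a$ is then an isomorphism of right-right $H$-Hopf modules with inverse $b$.
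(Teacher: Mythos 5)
Your proposal is correct and follows essentially the same route as the paper: the paper likewise shows that $p_{M^{co H}\ot H}$ coequalizes the pair $(\psi_{M^{coH}}\ot H,\, M^{coH}\ot \varphi_{H})$ via (\ref{new-c5-2-2}), (b3) and the strong condition (\ref{(c1)}), shows that any coequalizing morphism $t$ satisfies $t\co \nabla_{M}=t$ via (\ref{tensor-idempotent-1}), (\ref{new-c5-2-3}) and (\ref{pi-l}) (your step (II) is exactly this with $t=n_{M^{coH}}$), and then verifies the comodule and quasilinearity conditions using (\ref{tensor-idempotent-2}), (\ref{quasilineal-1}), (\ref{action-induction}) and (\ref{(c2)}). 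The only difference is packaging: the paper concludes by uniqueness of coequalizers, while you build the mutually inverse morphisms $a$ and $b$ explicitly, which is the same argument rearranged.
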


\begin{proof} First note that $p_{M^{coH}\ot H}\co (\psi_{M^{coH}}\ot H)=p_{M^{coH}\ot H}\co (M^{coH}\ot \varphi_{H})$ because
\begin{itemize}
\item[ ]$\hspace{0.38cm}\nabla_{M}\co  (\psi_{M^{coH}}\ot H)  $
\item[ ]$=(p_{M}\ot H)\co \rho_{M}\co \phi_{M}\co ((q_{M}\co \phi_{M}\co (i_{M}\ot i_{L}))\ot H) $
\item[ ]$=(p_{M}\ot H)\co \rho_{M}\co \phi_{M}\co  ((\phi_{M}\co (\phi_{M}\ot \lambda_{H})\co (i_{M}\ot (\delta_{H}\co i_{L})))\ot H) $
\item[ ]$=(p_{M}\ot H)\co \rho_{M}\co \phi_{M}\co  ((\phi_{M}\co (i_{M}\ot  i_{L}))\ot H) $
\item[ ]$=(p_{M}\ot H)\co \rho_{M}\co \phi_{M}\co  (i_{M}\ot 	(\mu_{H}\co (i_{L}\ot H))) $
\item[ ]$= \nabla_{M}\co (M^{coH}\ot \varphi_{H}) .$
\end{itemize}

The first and the last equalities are consequence of the definition of $\nabla_{M}$. The second one follows by 
(\ref{new-c5-2-2}), the third one by (b3) of Definition \ref{Hopf-module} and the properties of $\Pi_{H}^{L}$. Finally, the fourth one relies on (\ref{(c1)}).

Let $t:M^{coH}\ot H\rightarrow P$ be a morphism such  that $t\co (\psi_{M^{coH}}\ot H)=t\co (M^{coH}\ot \varphi_{H})$. Put $t^{\prime}:M^{coH}\times H\rightarrow P$ defined by $t^{\prime}=t\co i_{M^{co H}\ot H}$. Then 
$$t^{\prime}\co p_{M^{co H}\ot H}=t\co \nabla_{M}=t$$
because
\begin{itemize}
\item[ ]$\hspace{0.38cm}t\co \nabla_{M}  $
\item[ ]$= t\co ((p_{M}\co \phi_{M})\ot H)\co (i_{M}\ot \delta_{H})$
\item[ ]$= t\co ((p_{M}\co \phi_{M})\ot H)\co (i_{M}\ot ((\Pi_{H}^{L}\ot H)\co\delta_{H}))$
\item[ ]$= t\co (\psi_{M^{coH}}\ot H)\co (M^{coH}\ot ((p_{L}\ot H)\co\delta_{H})) $
\item[ ]$= t\co  (M^{coH}\ot (\Pi_{H}^{L}\ast id_{H}))$
\item[ ]$= t.$
\end{itemize}
Applying (\ref{tensor-idempotent-1}) we obtain the first equality. The second one relies on (\ref{new-c5-2-3}). The third one follows by the definition of $\psi_{M^{coH}}$ and the fourth one by the properties of $t$. Finally the last one is a consequence of (\ref{pi-l}). 

The morphism $t^{\prime}$ is the unique such that $t^{\prime}\co p_{M^{co H}\ot H}=t$ because if $r:M^{coH}\times H\rightarrow P$ satisfies $r\co p_{M^{co H}\ot H}=t$, composing with $i_{M^{co H}\ot H}$, we obtain 
$r=t\co i_{M^{co H}\ot H}=t^{	\prime}.$ Therefore, 

$$
\setlength{\unitlength}{1mm}
\begin{picture}(101.00,10.00)
\put(20.00,8.00){\vector(1,0){25.00}}
\put(20.00,4.00){\vector(1,0){25.00}}
\put(65.00,6.00){\vector(1,0){21.00}}
\put(32.00,11.00){\makebox(0,0)[cc]{$\psi_{M^{co H}}\ot H$ }}
\put(33.00,0.00){\makebox(0,0)[cc]{$M^{co H}\ot \varphi_{H}
$ }} 
\put(76.00,9.00){\makebox(0,0)[cc]{$p_{M^{co H}\ot H}$ }}
\put(7.00,6.00){\makebox(0,0)[cc]{$ M^{co H}\otimes H_{L}\ot H$ }}
\put(56.00,6.00){\makebox(0,0)[cc]{$ M^{co H}\ot H$ }}
\put(98.00,6.00){\makebox(0,0)[cc]{$M^{co H}\times H $ }}
\end{picture}
$$

is a coequalizer diagram and as a consequence there exists an isomorphism $$s_{M}:M^{coH}\ot_{H_{L}}H\rightarrow M^{co H}\times H$$ 
such that 
\begin{equation}
\label{iso-aux}
s_{M}\co n_{M^{coH}}=p_{M^{co H}\ot H}.
\end{equation}

The morphism $s_{M}$ belongs to the category of right-right $H$-Hopf modules. Indeed: It is a morphism of right $H$-comodules because composing with $n_{M^{coH}}$  and using the equalities (\ref{iso-aux}), (\ref{tensor-idempotent-2}) we have 
$$\rho_{M^{co H}\times H}\co s_{M}\co n_{M^{coH}}=\rho_{M^{co H}\times H}\co p_{M^{co H}\ot H}
=(p_{M^{co H}\ot H}\ot H)\co (M^{coH}\ot \delta_{H})\co \nabla_{M}$$
$$=(p_{M^{co H}\ot H}\ot H)\co (M^{coH}\ot \delta_{H})=((s_{M}\co n_{M^{coH}})\ot H)\co (M^{coH}\ot \delta_{H})=(s_{M}\ot H)\co \rho_{M^{co H}\ot_{H_{L}} H}\co n_{M^{coH}}.$$
Moreover, by (\ref{quasilineal-1}) and (\ref{action-induction}) we know that $\phi_{M^{co H}\times H}^{\alpha_{M^{co H}\times H}}=\phi_{M^{co H}\times H}$ and $\phi_{M^{co H}\ot_{H_{L}} H}^{\alpha_{M^{co H}\ot_{H_{L}} H}}=\phi_{M^{co H}\ot_{H_{L}} H}$. As a consequence, $s_{M}$ is $H$-quasilineal because composing with the coequalizer  $n_{M^{coH}}\ot H$ and the equalizer $i_{M^{coH}\ot H}$ we obtain
\begin{itemize}
\item[ ]$\hspace{0.38cm} i_{M^{coH}\ot H}\co s_{M}\co  \phi_{M^{co H}\ot_{H_{L}} H}\co (n_{M^{coH}}\ot H)$
\item[ ]$=\omega_{M}^{\prime}\co \phi_{M}\co (i_{M}\ot \mu_{H})  $
\item[ ]$=\omega_{M}^{\prime}\co \phi_{M}\co (i_{M}\ot  \mu_{H})\co (\nabla_{M}\ot H)$
\item[ ]$= i_{M^{coH}\ot H}\co \phi_{M^{co H}\times H}\co ((s_{M}\co n_{M^{coH}})\ot H), $
\end{itemize}
where the first equality follows by (\ref{quasi-module}), the second one by (\ref{(c2)}), and the last one by (\ref{iso-aux}).

\end{proof}

\begin{theorem}
\label{principalofpaper}
For any Hopf quasigroup H such the the functor $-\ot H$ preserves coequalizers, the category  ${\mathcal SM}^{H}_{H}$  is  equivalent to the category ${\mathcal C}_{H_{L}}$.
\end{theorem}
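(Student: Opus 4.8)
My plan is to prove the statement by exhibiting $F$ and $G$ as quasi-inverse functors, that is, by producing natural isomorphisms $\epsilon\colon F\co G\Rightarrow \mathrm{Id}_{{\mathcal SM}^{H}_{H}}$ and $\eta\colon \mathrm{Id}_{{\mathcal C}_{H_{L}}}\Rightarrow G\co F$. The two functors are already available: $F=-\ot_{H_{L}}H$ lands in ${\mathcal SM}^{H}_{H}$ by Propositions \ref{induction-functor} and \ref{fact-induction-functor}, and $G=(\;)^{coH}$ by Proposition \ref{coinvariant-functor}. Since the theorem only claims an equivalence, producing these two natural isomorphisms is enough (one may afterwards upgrade them to an adjoint equivalence $F\dashv G$, as announced in the introduction). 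The $F\co G$ direction is essentially free from the machinery already assembled; the $G\co F$ direction is where the work lies.

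For the counit, fix a strong Hopf module $(M,\phi_{M},\rho_{M})$. Then $F(G(M))=M^{coH}\ot_{H_{L}}H$, and Proposition \ref{aux-prop-iso} supplies a Hopf module isomorphism $s_{M}\colon M^{coH}\ot_{H_{L}}H\rightarrow M^{coH}\times H$, while Theorem \ref{fundamental} supplies the Hopf module isomorphism $\alpha_{M}^{-1}\colon M^{coH}\times H\rightarrow M$. I would set $\epsilon_{M}=\alpha_{M}^{-1}\co s_{M}$, which is an isomorphism in ${\mathcal M}^{H}_{H}$, hence in the full subcategory ${\mathcal SM}^{H}_{H}$. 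Naturality of $\epsilon$ then reduces, after composing with the coequalizer $n_{M^{coH}}$ (recall \eqref{iso-aux}), to the naturality of $s$ and of $\alpha$; this is a routine verification using the defining relations \eqref{coinv-morphism}, \eqref{coinv-morphism-1} of the morphisms $g^{coH}$.

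The core of the proof is the unit $\eta$, i.e.\ the identification $(N\ot_{H_{L}}H)^{coH}\cong N$ for $(N,\psi_{N})\in{\mathcal C}_{H_{L}}$. Writing $j_{N}=n_{N}\co (N\ot \eta_{H})$, I would first note from the coequalizer relation (composed on the right with $N\ot H_{L}\ot\eta_{H}$) that $n_{N}\co (N\ot i_{L})=j_{N}\co \psi_{N}$, whence by \eqref{idem-strong} and $\Pi_{H}^{L}=i_{L}\co p_{L}$ one gets $q_{N\ot_{H_{L}} H}\co n_{N}=j_{N}\co \psi_{N}\co (N\ot p_{L})$; in particular $q_{N\ot_{H_{L}} H}\co j_{N}=j_{N}$ (using $\psi_{N}\co (N\ot \eta_{H_{L}})=id_{N}$ and $p_{L}\co \eta_{H}=\eta_{H_{L}}$), so $j_{N}$ factors through the coinvariants and I may define $\eta_{N}=p_{N\ot_{H_{L}} H}\co j_{N}$ with $i_{N\ot_{H_{L}} H}\co \eta_{N}=j_{N}$. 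Applying $p_{N\ot_{H_{L}} H}$ to $q_{N\ot_{H_{L}} H}\co n_{N}=j_{N}\co \psi_{N}\co (N\ot p_{L})$ gives $p_{N\ot_{H_{L}} H}\co n_{N}=\eta_{N}\co \psi_{N}\co (N\ot p_{L})$, and since the left-hand side is epi (composite of the epimorphisms $p_{N\ot_{H_{L}} H}$ and $n_{N}$), $\eta_{N}$ is epi. For a one-sided inverse I would induce $r\colon N\ot_{H_{L}}H\rightarrow N$ from $g=\psi_{N}\co (N\ot p_{L})$; the descent condition $g\co (\psi_{N}\ot H)=g\co (N\ot \varphi_{H})$ reduces to $\mu_{H_{L}}\co (H_{L}\ot p_{L})=p_{L}\co \mu_{H}\co (i_{L}\ot H)$, which holds because $p_{L}\co \mu_{H}=p_{L}\co \mu_{H}\co (H\ot \Pi_{H}^{L})$ (the coequalizer presentation of $p_{L}$). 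Then $\bar r=r\co i_{N\ot_{H_{L}} H}$ satisfies $\bar r\co \eta_{N}=r\co j_{N}=g\co (N\ot \eta_{H})=\psi_{N}\co (N\ot \eta_{H_{L}})=id_{N}$, so $\eta_{N}$ is a split mono; being also epi, it is an isomorphism.

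The step I expect to be the main obstacle is the structural part of this last direction: checking that $\eta_{N}$ (equivalently its inverse $\bar r$) is a morphism of right $H_{L}$-modules for the action $\psi_{(N\ot_{H_{L}} H)^{coH}}=p_{N\ot_{H_{L}} H}\co \phi_{N\ot_{H_{L}} H}\co (i_{N\ot_{H_{L}} H}\ot i_{L})$, and that $\eta$ is natural in $N$. Both are diagram chases through the coequalizer $n_{N}$, using \eqref{quasi-module}, \eqref{comodule}, \eqref{mu-assoc-2} and the monoid identities \eqref{monoid-hl-2}, together with the defining relation \eqref{mor-induction} of $f\ot_{H_{L}}H$; the delicate point is that the $H_{L}$-action on the coinvariants is defined via $\phi_{N\ot_{H_{L}} H}$ and $i_{L}$, so one must transport it correctly across $\bar r$ and $g=\psi_{N}\co (N\ot p_{L})$. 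Once $\eta$ and $\epsilon$ are shown to be natural isomorphisms in their respective categories, $F$ and $G$ are quasi-inverse, and the equivalence ${\mathcal SM}^{H}_{H}\simeq{\mathcal C}_{H_{L}}$ follows.
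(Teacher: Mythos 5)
Your proposal is correct and follows essentially the same route as the paper: your counit $\epsilon_{M}=\alpha_{M}^{-1}\co s_{M}$ is exactly the paper's $v_{M}$, and your unit $\eta_{N}$ (determined by $i_{N\ot_{H_{L}}H}\co \eta_{N}=n_{N}\co (N\ot \eta_{H})$), with one-sided inverse induced on the coequalizer from $\psi_{N}\co (N\ot p_{L})$, coincides with the paper's $u_{N}$ and $x_{N}=m_{N}\co i_{N\ot_{H_{L}}H}$. The only cosmetic deviations are that you get invertibility of the unit via the split-mono-plus-epi argument instead of computing both composites, and you conclude the equivalence from the two natural isomorphisms directly rather than verifying the triangular identities of the adjunction $F\dashv G$ as the paper does.
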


\begin{proof} To prove the theorem we will obtain that the induction functor F is left adjoint to the coinvariants functor $G$ and that the unit and counit  associated to this adjunction are natural isomorphisms. Then we divide the proof in three steps. 

{\it \underline{Step 1:}} In this step we will define the unit of the adjunction. For any right $H_{L}$-module  $(N, \psi_{N})$ define $u_{N}:N\rightarrow GF(N)=(N\ot_{H_{L}}H)^{coH}$ as the unique morphism such that 
\begin{equation}
\label{unit}
i_{N\ot_{H_{L}}H}\co u_{N}=n_{N}\co (N\ot \eta_{H}). 
\end{equation}
This morphism exists and is unique because by (\ref{comodule}) and (\ref{delta-pi-r-var}) we have 
$$((N\ot_{H_{L}}H)\ot \overline{\Pi}_{H}^{R})\co \rho_{N\ot_{H_{L}}H}\co n_{N}\co (N\ot \eta_{H})=
(n_{N}\ot \overline{\Pi}_{H}^{R})\co (N\ot (\delta_{H}\co \eta_{H}))=(n_{N}\ot H)\co (N\ot (\delta_{H}\co \eta_{H}))
$$
$$=\rho_{N\ot_{H_{L}}H}\co n_{N}\co (N\ot \eta_{H}).$$

Also, it is a morphism in  ${\mathcal C}_{H_{L}}$. Indeed: Composing with the equalizer $i_{N\ot_{H_{L}}H}$ we have 

\begin{itemize}
\item[ ]$\hspace{0.38cm} i_{N\ot_{H_{L}}H} \co \psi_{N\ot_{H_{L}}H}\co (u_{N}\ot H_{L}) $
\item[ ]$=q_{N\ot_{H_{L}}H} \co \phi_{N\ot_{H_{L}}H}\co ((n_{N}\co (N\ot \eta_{H}))\ot i_{L})$
\item[ ]$=q_{N\ot_{H_{L}}H} \co n_{N}\co (N\ot (\mu_{H}\co (\eta_{H}\ot i_{L})))  $
\item[ ]$=q_{N\ot_{H_{L}}H} \co n_{N}\co (N\ot i_{L}) $
\item[ ]$= n_{N}\co (N\ot (\Pi_{H}^{L}\co i_{L})) $
\item[ ]$=n_{N}\co (N\ot  i_{L}) $
\item[ ]$=n_{N}\co (N\ot (\mu_{H}\co (i_{L}\ot \eta_{H}))) $
\item[ ]$=n_{N}\co (\psi_{N}\ot \eta_{H}) $
\item[ ]$=i_{N\ot_{H_{L}}H} \co u_{N}\co \psi_{N}, $
\end{itemize}

where the first and last equalities follow by (\ref{unit}), the second one by (\ref{quasi-module}), the third and the sixth one by the unit properties, the fourth one by (\ref{idem-strong}), the fifth one by the properties of $\Pi_{H}^{L}$ and, the seventh one is a consequence of the definition of $N\ot_{H_{L}}H$. 

The morphism $u_{N}$ is natural in $N$ because if $f:N\rightarrow P$ is a morphism in ${\mathcal C}_{H_{L}}$ by 
(\ref{coinv-morphism}), (\ref{unit}), (\ref{mor-induction}) we have 
$$ i_{P\ot_{H_{L}}H}\co (f\ot_{H_{L}}H)^{coH}\co u_{N}=(f\ot_{H_{L}}H)\co i_{N\ot_{H_{L}}H}\co u_{N}=
(f\ot_{H_{L}}H)\co n_{N}\co (N\ot \eta_{H})$$
$$=n_{P}\co (f\ot \eta_{H})=i_{P\ot_{H_{L}}H}\co u_{P}\co f,$$
and then $(f\ot_{H_{L}}H)^{coH}\co u_{N}=u_{P}\co f$.

Finally, we will prove that $u_{N}$ is an isomorphism for all right $H_{L}$-module $N$. First note that 
$\psi_{N}\co (\psi_{N}\ot p_{L})=\psi_{N}\co (N\ot (p_{L}\co\varphi_{H}))$ and then there exists a unique morphism $m_{N}:N\ot_{H_{L}}H\rightarrow N$ such that 
\begin{equation}
\label{mn}
m_{N}\co n_{N}=\psi_{N}\co (N\ot p_{L}). 
\end{equation}
Define $x_{N}=m_{N}\co i_{N\ot_{H_{L}}H}:(N\ot_{H_{L}}H)^{coH}\rightarrow N$. Then, composing with 
$i_{N\ot_{H_{L}}H}$ and $p_{N\ot_{H_{L}}H}\co n_{N}$ and using (\ref{idem-strong}), (\ref{mn}), (\ref{unit}) and the properties of $\Pi_{H}^{L}$ we have 
\begin{itemize}
\item[ ]$\hspace{0.38cm} i_{N\ot_{H_{L}}H} \co  u_{N}\co x_{N}\co p_{N\ot_{H_{L}}H}\co n_{N}$
\item[ ]$=i_{N\ot_{H_{L}}H} \co  u_{N}\co m_{N}\co q_{N\ot_{H_{L}}H}\co n_{N} $
\item[ ]$=i_{N\ot_{H_{L}}H} \co  u_{N}\co \psi_{N}\co (N\ot (p_{L}\co \Pi_{H}^{L}))  $
\item[ ]$=n_{N}\co ( (\psi_{N}\co (N\ot p_{L}))\ot \eta_{H})$
\item[ ]$= n_{N}\co (N\ot (\mu_{H}\co ( \Pi_{H}^{L}\ot \eta_{H})))$
\item[ ]$=q_{N\ot_{H_{L}}H} \co n_{N}. $
\end{itemize}
Therefore, $u_{N}\co x_{N}=id_{(N\ot_{H_{L}}H)^{coH}}$. Moreover, by (\ref{unit}) and  (\ref{mn}), 
$$x_{N}\co u_{N}=\psi_{N}\co (N\ot (p_{L}\co \eta_{H}))=id_{N}.$$

{\it \underline{Step 2:}} For any $(M,\phi_{M},\rho_{M})\in {\mathcal SM}^{H}_{H}$ the counit is defined by 
$$ v_{M}=\alpha_{M}^{-1}\co s_{M}:M^{coH}\ot_{H_{L}}H\rightarrow M,$$ 
where $\alpha_{M}^{-1}=\omega_{M}\circ i_{M^{co H}\ot H}$ is the  inverse of the isomorphism $\alpha_{M}$ defined in Theorem \ref{fundamental} and $s_{M}$ the isomorphism defined in Proposition \ref{aux-prop-iso}. Note that  $\alpha_{M}^{-1}$ and $s_{M}$ are isomorphisms in ${\mathcal SM}^{H}_{H}$ and then $v_{M}$ is an isomorphism in ${\mathcal SM}^{H}_{H}$. Also, $v_{M}$ is the unique morphism such that 
\begin{equation}
\label{counit}
v_{M}\co n_{M^{coH}}=\phi_{M}\co (i_{M}\ot H),
\end{equation}
because by (\ref{new-c5-2-2}), (b3) of Definition \ref{Hopf-module}, the properties of $\Pi_{H}^{L}$ and 
(\ref{(c1)}),
\begin{itemize}
\item[ ]$\hspace{0.38cm}\phi_{M}\co ((i_{M}\co \psi_{M^{coH}})\ot H)  $
\item[ ]$=\phi_{M}\co ((\phi_{M}\co (\phi_{M}\ot \lambda_{H})\co (i_{M}\ot (\delta_{H}\co i_{L})))\ot H)  $
\item[ ]$= \phi_{M}\co ((\phi_{M}\co  (i_{M}\ot (\Pi_{H}^{L}\co i_{L})))\ot H) $
\item[ ]$= \phi_{M}\co ((\phi_{M}\co  (i_{M}\ot  i_{L}))\ot H) $
\item[ ]$=\phi_{M}\co ( i_{M}\ot \varphi_{H}), $
\end{itemize}
and, on the other hand, by (\ref{iso-aux})
 $$v_{M}\co n_{M^{coH}}=\alpha_{M}^{-1}\co s_{M}\co n_{M^{coH}}=\omega_{M}\co
 \omega_{M}^{\prime}\co 
 \omega_{M}=\omega_{M}=\phi_{M}\co (i_{M}\ot H).$$

{\it \underline{Step 3:}} Now we prove  the triangular identities for the unit and the counit that we defined previously. Indeed:  The first triangular identity holds because composing with $n_{N}$ we have 
\begin{itemize}
\item[ ]$\hspace{0.38cm} v_{N\ot_{H_{L}}H}\co (u_{N}\ot_{H_{L}}H)\co n_{N} $
\item[ ]$=v_{N\ot_{H_{L}}H}\co n_{ (N\ot_{H_{L}}H)^{coH}}\co (u_{N}\ot H) $
\item[ ]$= \phi_{N\ot_{H_{L}}H}\co ((i_{N\ot_{H_{L}}H}\co u_{N})\ot H)$
\item[ ]$= \phi_{N\ot_{H_{L}}H}\co ((n_{N}\co (N\ot \eta_{H}))\ot H)$
\item[ ]$= n_{N}\co (N\ot (\mu_{H}\co (\eta_{H}\ot H))) $
\item[ ]$=n_{N}, $
\end{itemize}
where the first equality follows by (\ref{mor-induction}), the second one by (\ref{counit}), the third one by (\ref{unit}) and the fourth one by (\ref{quasi-module}). The last one follows by the properties of the unit $\eta_{H}$. 
Finally, if we compose with $i_{M}$, applying (\ref{coinv-morphism}), (\ref{unit}) and (\ref{counit}) we obtain:
\begin{itemize}
\item[ ]$\hspace{0.38cm} i_{M}\co v_{M}^{coH}\co u_{M^{coH}} $
\item[ ]$=v_{M}\co  i_{N\ot_{H_{L}}H}\co u_{M^{coH}}  $
\item[ ]$= v_{M}\co n_{M^{coH}}\co ( M^{coH}\ot \eta_{H})$
\item[ ]$=\phi_{M}\co (i_{M}\ot \eta_{H})  $
\item[ ]$=i_{M}, $
\end{itemize}
and then $v_{M}^{coH}\co u_{M^{coH}} =id_{M^{coH}}.$

\end{proof}

\section*{Acknowledgements}
The  authors were supported by  Ministerio de Econom\'{\i}a y Competitividad and by Feder founds. Project MTM2013-43687-P: Homolog\'{\i}a, homotop\'{\i}a e invariantes categ\'oricos en grupos y \'algebras no asociativas.

\end{document}